\documentclass[11pt,reqno]{amsart} 
\usepackage{amsfonts}
\usepackage{mathrsfs}
\usepackage{comment}
\usepackage{lscape}
\usepackage{amssymb,latexsym}
\usepackage{cite} 
\usepackage{rotating} 
\usepackage[height=190mm,width=130mm]{geometry} 
\usepackage{stackengine}
\setstackgap{S}{1pt}

\theoremstyle{plain}
\newtheorem{theorem}{Theorem}
\newtheorem{lemma}{Lemma}

\newtheorem{conclusion}{Conclusion}

\theoremstyle{definition}

\theoremstyle{remark}
\newtheorem{remark}{Remark}


\numberwithin{equation}{section} 
\def\mbf#1{\mbox{\boldmath$#1$}}

\begin{document}
	\title[
	\hspace{-3mm} Linear (in)dependence of sequences of derivatives of $x^n\sin x$, $x^n\cos x$]
	{On the linear (in)dependence of sequences of derivatives of the functions $x^n\sin x$ and $x^n\cos x$}
	\author{Jozef Fecenko, Enno Diekema}
	\address{}
	\email{fecenko.euba@gmail.com, e.diekema@gmail.com}
	
	\thanks{}
	
\begin{abstract}
	The main goal of the paper is to prove that the sequence of functions $f(x), Df(x), \dots, D^{2n+1}f(x)$, where $f(x)$ is $x^n\sin x$ or $x^n\cos x$ are linearly independent. Or more generally: that the sequence of functions
	$D^kf(x), D^{k+1}f(x), \dots, D^{2n+k+1}f(x)$, $k\in \mathbb{N}$ is linearly independent.
	The problem is solved by a suitable transformation of the matrix of determinant of the Wronskian. Another approach for a special sequence of derivatives of functions uses only the definition of linear independence of functions. This approach generates interesting, non-elementary combinatorial identities.
\end{abstract}
	
\subjclass[2020]{34A30, 15A03, 05A19}
	
\keywords{linear dependence and independence functions, Wronskian, differential equations, differential operator, combinatorial identities, Laplace transformation}
\maketitle

\section{Introduction}
The $n$ functions \(f_1(x), f_1(x),\ldots, f_n(x)\) are linearly dependent if, for some 
\(c_1, c_2, \ldots, c_n \in \mathbb{R}\) not all zero, 
\[\sum_{i=1}^nc_if_i(x)=0\]
for all \(x\) in some interval \(I\). If the functions are not linearly dependent, they are said to be linearly independent. \cite{WE1}\par
The Wronskian of a set of \(n\) functions \(\phi_1, \phi_2, \ldots, \phi_n\) is defined by 
\[W(\phi_1, \phi_2, \ldots, \phi_n)=
\left|\begin{array}{cccc}
	\phi_1 & \phi_2 & \dots & \phi_n\\
	\phi_1^\prime & \phi_2^\prime & \dots & \phi_n^\prime\\
	\vdots & \vdots & \ddots & \vdots \\
	\phi_1^{(n-1)} & \phi_2^{(n-1)} & \dots & \phi_n^{(n-1)}
\end{array}\right|.\]
If the Wronskian is nonzero in some region, the functions \(\phi_i\) are linearly independent. If \(W=0\) over some range, the functions are linearly dependent somewhere in the range. \cite{WE2}  
\section{A method using the Wronskian}
The Wronskian, usually introduced in standard courses in Ordinary Differential Equations, is a very useful tool in algebraic geometry to detect ramification loci of
linear systems. \cite{GSC}
\begin{lemma}\label{defrk}
Let $\mbf {A}=[a_{ij}]_{n\times n}$ and 
\[\mbf{R}_k=[r_{k_{ij}}]_{n\times n}, \quad 
r_{k_{i,j}}=\left\{ \begin{array}{cl}
	1 & \textrm{if $i=j$ or ($i=j+1$ and $k+1\le i\le n$}) \\[0.5ex]
	0 & \textrm{otherwise} \end{array} \right.\]
\(k=1,2,\dots,n-1, n=2,3,\dots\)
then
\begin{equation}\label{eq:rtran}
\mbf{R}_k\mbf{A}=\begin{bmatrix}
a_{11}&a_{12}&\dots & a_{1n}\\
a_{21}& a_{22}&\dots & a_{2n}\\
\vdots & \vdots & \ddots & \vdots\\
a_{k1}&a_{k2}&\dots & a_{kn}\\
a_{k+1,1}+a_{k1} & a_{k+1,2}+a_{k2} & \dots & a_{k+1,n}+a_{kn}\\
a_{k+2,1}+a_{k+1,1} & a_{k+2,2}+a_{k+1,2} & \dots & a_{k+2,n}+a_{k+1,n}\\
\vdots & \vdots & \ddots & \vdots\\
a_{n,1}+a_{n-1,1} & a_{n,2}+a_{n-1,2} & \dots & a_{n,n}+a_{n-1,n}
\end{bmatrix}
\end{equation}
\begin{equation}\label{eq:stran}
\mbf{A}\mbf{R}_k^T=\begin{bmatrix}
a_{11}  & \dots & a_{1k} & a_{1,k+1}+a_{1k} &  \dots & a_{1n}+a_{1,n-1}\\
a_{21} & \dots & a_{2k} & a_{2,k+1}+a_{2k} &  \dots & a_{2n}+a_{2,n-1}\\
\vdots & \ddots & \vdots & \vdots &  \ddots & \vdots \\
a_{n-1,1} & \dots & a_{n-1,k} & a_{n-1,k+1}+a_{n-1,k} & \dots & a_{n-1,n}+a_{n-1,n-1}\\
a_{n1} & \dots & a_{nk} & a_{n,k+1}+a_{nk} &  \dots & a_{nn}+a_{n,n-1}
\end{bmatrix}
\end{equation}
 \(\mbf{R}_k^T\)  is the transposed matrix to the matrix \(\mbf{R}_k\).
\end{lemma}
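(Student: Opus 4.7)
The plan is a direct entry-by-entry verification, essentially recognizing $\mathbf{R}_k$ as an elementary-type matrix. First I would observe that $\mathbf{R}_k$ differs from the identity only by carrying $1$'s at the subdiagonal positions $(i,i-1)$ for $k+1\le i\le n$, so intuitively left-multiplication by $\mathbf{R}_k$ performs, simultaneously for each $i\ge k+1$, the row operation ``replace row $i$ by row $i$ plus row $i-1$,'' while right-multiplication by $\mathbf{R}_k^T$ does the analogous operation on columns.

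To make this precise for (2.1), I would expand the $(i,j)$ entry of $\mathbf{R}_k\mathbf{A}$ as $\sum_{l=1}^n r_{k_{il}}\,a_{lj}$ and appeal to the piecewise definition of $r_{k_{il}}$. At most two terms survive: the diagonal contribution $a_{ij}$ arising from $l=i$, and the subdiagonal contribution $a_{i-1,j}$ arising from $l=i-1$, the latter present only when $k+1\le i\le n$. Splitting the cases $i\le k$ and $i\ge k+1$ then reproduces exactly the block description on the right-hand side of (2.1).

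For (2.2) a symmetric computation suffices: $(\mathbf{A}\mathbf{R}_k^T)_{ij}=\sum_{l=1}^n a_{il}\,r_{k_{jl}}$, whose surviving terms are $a_{ij}$ (from $l=j$) and $a_{i,j-1}$ (from $l=j-1$, provided $k+1\le j\le n$). Hence columns with $j\le k$ are preserved while columns with $j\ge k+1$ pick up their left neighbor, which matches (2.2) entry for entry.

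The argument is really bookkeeping rather than a genuine obstacle; the only minor subtlety worth flagging is to check that the index condition $i\ge k+1$ never produces a nonsensical reference $l=i-1=0$, which is automatic because $k\ge 1$ forces $i\ge 2$ whenever a subdiagonal term appears.
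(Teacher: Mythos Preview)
Your proposal is correct and follows exactly the approach the paper indicates: the paper itself simply states that ``the proof follows from the definition of matrix multiplication,'' and your entry-by-entry computation of $(\mathbf{R}_k\mathbf{A})_{ij}$ and $(\mathbf{A}\mathbf{R}_k^T)_{ij}$ is precisely that. The only difference is that you have written out the details the paper omits, including the harmless index check $i\ge 2$.
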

The proof follows from the definition of matrix multiplication.
\begin{remark}
Of course for determinants, we have
\[\left| \mbf{R}_k\right|=\left|\mbf{R}_k^T \right|=1 \]
 $k=1,2,\dots,n-1$.
\end{remark}
\begin{lemma}{\label{lmm1}} Let $D$ be a differential operator and $f$ be a $4r$ ($r\in \mathbb{N}$) times differentiable function on an interval $I$. Let us denote the matrix 
	\begin{gather}
		\mbf{Z}=\left[ \begin{array}{cccccc}
			\scriptstyle  D^0f &\scriptstyle  D^2f &\scriptstyle  D^4f &\dots& \scriptstyle D^{2r-2}f &\scriptstyle  D^{2r}f\\
			\scriptstyle D^2f&\scriptstyle D^4f&\scriptstyle D^6f  & \dots &\scriptstyle D^{2r}f  &\scriptstyle  D^{2r+2}f \\
			\scriptstyle D^4f&\scriptstyle D^6f&\scriptstyle D^8f & \dots &\scriptstyle D^{2r+2}f&\scriptstyle D^{2r+4}f\\
			\scriptstyle \vdots &\scriptstyle \vdots  & \scriptstyle \vdots & \scriptstyle\ddots &\scriptstyle \vdots & \scriptstyle \vdots\\
			\scriptstyle D^{2m}f & \scriptstyle D^{2m+2}f& \scriptstyle D^{2m+4}f& \scriptstyle \dots & \scriptstyle D^{2(m+r)-2}f&\scriptstyle D^{2(m+r)}f\\
			\scriptstyle \vdots &\scriptstyle \vdots  & \scriptstyle \vdots & \scriptstyle\ddots &\scriptstyle \vdots & \scriptstyle \vdots\\
			\scriptstyle D^{2r-2}f & \scriptstyle D^{2r}f& \scriptstyle D^{2r+2}f& \scriptstyle \dots & \scriptstyle D^{4r-4}f&\scriptstyle D^{4r-2}f\\
			\scriptstyle D^{2r}f & \scriptstyle D^{2r+2}f& \scriptstyle D^{2r+4}f& \scriptstyle \dots & \scriptstyle D^{4r-2}f&\scriptstyle D^{4r}f
		\end{array}\label{eq:ASD}\right]
	\end{gather}
then 
\begin{gather}
		\left|\mbf{Z}\right|=\left|\arraycolsep=0.15pt\def\arraystretch{1.2}
		\begin{array}{cccccc}
			\scriptstyle  D^0f &\scriptstyle  (D^2+1)f &\scriptstyle  (D^2+1)^2f &\dots & \scriptstyle (D^2+1)^{r-1}f &\scriptstyle  (D^2+1)^rf\\
			\scriptstyle  (D^2+1)f &\scriptstyle  (D^2+1)^2f &\scriptstyle  (D^2+1)^3f &\dots& \scriptstyle (D^2+1)^{r}f &\scriptstyle  (D^2+1)^{r+1}f\\
			\scriptstyle  (D^2+1)^2f &\scriptstyle  (D^2+1)^3f &\scriptstyle  (D^2+1)^4f &\dots& \scriptstyle (D^2+1)^{r+1}f &\scriptstyle  (D^2+1)^{r+2}f\\
			\scriptstyle \vdots &\scriptstyle \vdots  & \scriptstyle \vdots & \scriptstyle\ddots &\scriptstyle \vdots & \scriptstyle \vdots\\
			\scriptstyle  (D^2+1)^{m}f &\scriptstyle  (D^2+1)^{m+1}f &\scriptstyle  (D^2+1)^{m+2}f &\dots& \scriptstyle (D^2+1)^{m+r-1}f &\scriptstyle  (D^2+1)^{m+r}f\\
			\scriptstyle \vdots &\scriptstyle \vdots  & \scriptstyle \vdots & \scriptstyle\ddots &\scriptstyle \vdots & \scriptstyle \vdots\\
			\scriptstyle  (D^2+1)^{r-1}f &\scriptstyle  (D^2+1)^{r}f &\scriptstyle  (D^2+1)^{r+1}f &\dots& \scriptstyle (D^2+1)^{2r-2}f &\scriptstyle  (D^2+1)^{2r-1}f\\
			\scriptstyle  (D^2+1)^{r}f &\scriptstyle  (D^2+1)^{r+1}f &\scriptstyle  (D^2+1)^{r+2}f &\dots& \scriptstyle (D^2+1)^{2r-1}f &\scriptstyle  (D^2+1)^{2r}f
		\end{array}\right|\label{eq:ase}
\end{gather}
\begin{proof}
In the rest of the paper we will simply use $1$ instead of $D^0$. \par 
It seems as if we have transformed the simpler form of the determinant into a more complicated one. However, it can sometimes have advantages.
The procedure that transforms the original matrix in (\ref{eq:ASD}) into the matrix of determinant in (\ref{eq:ase}) can be divided into two parts. 
In the first part, we will use  a sequence of elementary row operations:
adding a multiple of one row to another causes the determinant to remain the same. The elementary row operation we will do using matrix transformations $\mbf{R}_k$.
In the second part we will do the same on columns with the matrix transformation $\mbf {S}_k$\par 
The first part of the proof.\par
The first step: 

\begin{gather}
\left|\mbf{R}_1\mbf{Z}\right| =\notag\\
\left|\arraycolsep=0.15pt\def\arraystretch{1.2}
\begin{array}{cccccc}
	\scriptstyle  f &\scriptstyle  D^2f &\scriptstyle  D^4f &\scriptstyle\dots& \scriptstyle D^{2r-2}f &\scriptstyle  D^{2r}f\\
	\scriptstyle (D^2+1)f&\scriptstyle D^2(D^2+1)f&\scriptstyle D^4(D^2+1)f  &\scriptstyle \dots &\scriptstyle D^{2r-2}(D^2+1)f  &\scriptstyle  D^{2r}(D^2+1)f \\
	\scriptstyle D^2(D^2+1)f&\scriptstyle D^4(D^2+1)f&\scriptstyle D^6(D^2+1)f & \scriptstyle\dots &\scriptstyle D^{2r}(D^2+1)f&\scriptstyle D^{2r+2}(D^2+1)f\\
	\scriptstyle \vdots &\scriptstyle \vdots  & \scriptstyle \vdots & \scriptstyle\ddots &\scriptstyle \vdots & \scriptstyle \vdots\\
	\scriptstyle D^{2m-2}(D^2+1)f & \scriptstyle D^{2m}(D^2+1)f& \scriptstyle D^{2m+2}(D^2+1)f& \scriptstyle \dots & \scriptstyle D^{2(m+r)-4}(D^2+1)f&\scriptstyle D^{2(m+r)-2}(D^2+1)f\\
	\scriptstyle \vdots &\scriptstyle \vdots  & \scriptstyle \vdots & \scriptstyle\ddots &\scriptstyle \vdots & \scriptstyle \vdots\\
	\scriptstyle D^{2r-4}(D^2+1)f & \scriptstyle D^{2r-2}(D^2+1)f& \scriptstyle D^{2r}(D^2+1)f& \scriptstyle \dots & \scriptstyle D^{4r-6}(D^2+1)f&\scriptstyle D^{4r-4}(D^2+1)f\\
	\scriptstyle D^{2r-2}(D^2+1)f & \scriptstyle D^{2r}(D^2+1)f& \scriptstyle D^{2r+2}(D^2+1)f& \scriptstyle \dots & \scriptstyle D^{4r-4}(D^2+1)f&\scriptstyle D^{4r-2}(D^2+1)f
\end{array}\right|\label{eq:1rdet}
\end{gather}
The second step: 
\begin{gather}
\left|\mbf{R}_2\mbf{R}_1\mbf{Z}\right| =\notag\\
\left|\arraycolsep=0.15pt\def\arraystretch{1.2}\begin{array}{cccccc}
		\scriptstyle  f &\scriptstyle  D^2f &\scriptstyle  D^4f &\scriptstyle\dots& \scriptstyle D^{2r-2}f &\scriptstyle  D^{2r}f\\
		\scriptstyle (D^2+1)f&\scriptstyle D^2(D^2+1)f&\scriptstyle D^4(D^2+1)f  &\scriptstyle \dots &\scriptstyle D^{2r-2}(D^2+1)f  &\scriptstyle  D^{2r}(D^2+1)f \\
		\scriptstyle (D^2+1)^2f&\scriptstyle D^2(D^2+1)^2f&\scriptstyle D^4(D^2+1)^2f & \scriptstyle\dots &\scriptstyle D^{2r-2}(D^2+1)^2f&\scriptstyle D^{2r}(D^2+1)^2f\\
		\scriptstyle \vdots &\scriptstyle \vdots  & \scriptstyle \vdots & \scriptstyle\ddots &\scriptstyle \vdots & \scriptstyle \vdots\\
		\scriptstyle D^{2m-4}(D^2+1)^2f & \scriptstyle D^{2m-2}(D^2+1)^2f& \scriptstyle D^{2m}(D^2+1)^2f& \scriptstyle \dots & \scriptstyle D^{2(m+r)-6}(D^2+1)^2f&\scriptstyle D^{2(m+r)-4}(D^2+1)^2f\\
		\scriptstyle \vdots &\scriptstyle \vdots  & \scriptstyle \vdots & \scriptstyle\ddots &\scriptstyle \vdots & \scriptstyle \vdots\\
		\scriptstyle D^{2r-6}(D^2+1)^2f & \scriptstyle D^{2r-4}(D^2+1)^2f& \scriptstyle D^{2r-2}(D^2+1)^2f& \scriptstyle \dots & \scriptstyle D^{4r-8}(D^2+1)^2f&\scriptstyle D^{4r-6}(D^2+1)^2f\\
		\scriptstyle D^{2r-4}(D^2+1)^2f & \scriptstyle D^{2r-2}(D^2+1)^2f& \scriptstyle D^{2r}(D^2+1)^2f& \scriptstyle \dots & \scriptstyle D^{4r-6}(D^2+1)^2f & \scriptstyle D^{4r-4}(D^2+1)^2f
	\end{array}\right|\label{eq:2rdet}
\end{gather}
Using mathematical induction it can be proved that the $k$-th ($k=1,2,\dots,r$) step will be 
\begin{gather}
\left|\mbf{R}_k\dots\mbf{R}_2\mbf{R}_1\mbf{Z}\right| =\notag\\
\left|\arraycolsep=0.15pt\def\arraystretch{1.2}\begin{array}{ccccc}
      \scriptstyle  f &\scriptstyle  D^2f &\scriptstyle\dots& \scriptstyle D^{2r-2}f &\scriptstyle  D^{2r}f\\
\scriptstyle (D^2+1)f&\scriptstyle D^2(D^2+1)f&
\scriptstyle \dots &\scriptstyle D^{2r-2}(D^2+1)f  &\scriptstyle  D^{2r}(D^2+1)f \\
\scriptstyle (D^2+1)^2f&\scriptstyle D^2(D^2+1)^2f&\scriptstyle\dots &\scriptstyle D^{2r-2}(D^2+1)^2f&\scriptstyle D^{2r}(D^2+1)^2f\\
\scriptstyle \vdots &\scriptstyle \vdots  & \scriptstyle\ddots &\scriptstyle \vdots & \scriptstyle \vdots\\
\scriptstyle (D^2+1)^kf&\scriptstyle D^2(D^2+1)^kf&\scriptstyle\dots &\scriptstyle D^{2r-2}(D^2+1)^kf&\scriptstyle D^{2r}(D^2+1)^kf\\
\scriptstyle D^2(D^2+1)^kf&\scriptstyle D^4(D^2+1)^kf& \scriptstyle\dots &\scriptstyle D^{2r}(D^2+1)^kf&\scriptstyle D^{2r+2}(D^2+1)^kf\\
\scriptstyle \vdots &\scriptstyle \vdots  & \scriptstyle\ddots &\scriptstyle \vdots & \scriptstyle \vdots\\
\scriptstyle D^{2(m-k)}(D^2+1)^kf \!&\! \scriptstyle D^{2(m-k)+2}(D^2+1)^kf& 
\scriptstyle \dots & \scriptstyle D^{2(m+r-k)-2}(D^2+1)^kf& \scriptstyle D^{2(m+r-k)}(D^2+1)^kf\\
\scriptstyle \vdots &\scriptstyle \vdots  & \scriptstyle\ddots &\scriptstyle \vdots & \scriptstyle \vdots\\
\scriptstyle D^{2(r-k)-2}(D^2+1)^kf \!&\!\!\!\! \scriptstyle D^{2(r-k)}(D^2+1)^kf& 
\scriptstyle \dots & \scriptstyle D^{4r-2k-4}(D^2+1)^kf& \scriptstyle D^{4r-2k-2}(D^2+1)^kf\\
\scriptstyle D^{2(r-k)}(D^2+1)^kf \!&\! \scriptstyle D^{2(r-k)+2}(D^2+1)^kf& 
\scriptstyle \dots & \scriptstyle D^{4r-2k-2}(D^2+1)^kf& \scriptstyle D^{4r-2k}(D^2+1)^kf\\
\end{array}\right|\label{eq:krdet}
\end{gather}
Finally, if $k=r$ we get the final result of the first part of the proof.  
\begin{gather}
\left|\mbf{R}_r\dots\mbf{R}_2\mbf{R}_1\mbf{Z}\right| =\notag\\
\left|\arraycolsep=0.3pt\def\arraystretch{1.2}\begin{array}{cccccc}
	\scriptstyle  f &\scriptstyle  D^2f &\scriptstyle  D^4f &\scriptstyle\dots & \scriptstyle D^{2r-2}f &\scriptstyle  D^{2r}f\\
	\scriptstyle  (D^2+1)f &\scriptstyle  D^2(D^2+1)f &\scriptstyle  D^4(D^2+1)f &\scriptstyle\dots& \scriptstyle D^{2r-2}(D^2+1)f &\scriptstyle  D^{2r}(D^2+1)f\\
	\scriptstyle  (D^2+1)^2f &\scriptstyle  D^2(D^2+1)^2f &\scriptstyle  D^4(D^2+1)^2f &\scriptstyle\dots& \scriptstyle D^{2r-2}(D^2+1)^{2}f &\scriptstyle  D^{2r}(D^2+1)^{2}f\\
	\scriptstyle \vdots &\scriptstyle \vdots  & \scriptstyle \vdots & \scriptstyle\ddots &\scriptstyle \vdots & \scriptstyle \vdots\\
	\scriptstyle  (D^2+1)^{m}f &\scriptstyle D^2(D^2+1)^{m}f &\scriptstyle  D^4(D^2+1)^{m}f &\scriptstyle\dots& \scriptstyle D^{2r-2}(D^2+1)^{m}f &\scriptstyle  D^{2r}(D^2+1)^{m}f\\
	\scriptstyle \vdots &\scriptstyle \vdots  & \scriptstyle \vdots & \scriptstyle\ddots &\scriptstyle \vdots & \scriptstyle \vdots\\
	\scriptstyle  (D^2+1)^{r-1}f &\scriptstyle  D^2(D^2+1)^{r-1}f &\scriptstyle  D^4(D^2+1)^{r-1}f &\scriptstyle\dots& \scriptstyle D^{2r-2}(D^2+1)^{r-1}f &\scriptstyle D^{2r}(D^2+1)^{r-1}f\\
	\scriptstyle  (D^2+1)^{r}f &\scriptstyle  D^2(D^2+1)^{r}f &\scriptstyle  D^4(D^2+1)^{r}f &\scriptstyle\dots& \scriptstyle D^{2r-2}(D^2+1)^{r}f &\scriptstyle  D^{2r}(D^2+1)^{r}f
\end{array}\right|\label{eq:fin1prdet}
\end{gather}

The second part of the proof.\par
The first step: 
\begin{gather}
\left|\mbf{R}_r\dots\mbf{R}_2\mbf{R}_1\mbf{Z}\mbf{R}_1^T\right| =\notag\\
\left|\arraycolsep=0.3pt\def\arraystretch{1.2}\begin{array}{cccccc}
\scriptstyle  f &\scriptstyle  (D^2+1)f &\scriptstyle  D^2(D^2+1)f &\scriptstyle\dots & \scriptstyle D^{2r-4}(D^2+1)f &\scriptstyle  D^{2r-2}(D^2+1)f\\
\scriptstyle  (D^2+1)f &\scriptstyle  (D^2+1)^2f &\scriptstyle  D^2(D^2+1)^2f &\scriptstyle\dots& \scriptstyle D^{2r-4}(D^2+1)^2f &\scriptstyle  D^{2r-2}(D^2+1)^2f\\
\scriptstyle  (D^2+1)^2f &\scriptstyle  (D^2+1)^3f &\scriptstyle  D^2(D^2+1)^3f &\scriptstyle\dots& \scriptstyle D^{2r-4}(D^2+1)^{3}f &\scriptstyle  D^{2r-2}(D^2+1)^{3}f\\
\scriptstyle \vdots &\scriptstyle \vdots  & \scriptstyle \vdots & \scriptstyle\ddots &\scriptstyle \vdots & \scriptstyle \vdots\\
\scriptstyle  (D^2+1)^{m}f &\scriptstyle (D^2+1)^{m+1}f &\scriptstyle  D^2(D^2+1)^{m+1}f &\scriptstyle\dots& \scriptstyle D^{2r-4}(D^2+1)^{m+1}f &\scriptstyle  D^{2r-2}(D^2+1)^{m+1}f\\
\scriptstyle \vdots &\scriptstyle \vdots  & \scriptstyle \vdots & \scriptstyle\ddots &\scriptstyle \vdots & \scriptstyle \vdots\\
\scriptstyle  (D^2+1)^{r-1}f &\scriptstyle  (D^2+1)^{r}f &\scriptstyle  D^2(D^2+1)^{r}f &\scriptstyle\dots& \scriptstyle D^{2r-4}(D^2+1)^{r}f &\scriptstyle D^{2r-2}(D^2+1)^{r}f\\
\scriptstyle  (D^2+1)^{r}f &\scriptstyle  (D^2+1)^{r+1}f &\scriptstyle D^2(D^2+1)^{r+1}f &\scriptstyle\dots& \scriptstyle D^{2r-4}(D^2+1)^{r+1}f &\scriptstyle  D^{2r-2}(D^2+1)^{r+1}f
\end{array}\right|\label{eq:1sdet}
\end{gather}
The second step:
\begin{gather}
\left|\mbf{R}_r\dots\mbf{R}_2\mbf{R}_1\mbf{Z}\mbf{R}_1^T\mbf{R}_2^T\right| =\notag\\
\left|\arraycolsep=0.3pt\def\arraystretch{1.2}\begin{array}{cccccc}
		\scriptstyle  f &\scriptstyle  (D^2+1)f &\scriptstyle  (D^2+1)^2f &\scriptstyle\dots & \scriptstyle D^{2r-6}(D^2+1)^2f &\scriptstyle  D^{2r-4}(D^2+1)^2f\\
		\scriptstyle  (D^2+1)f &\scriptstyle  (D^2+1)^2f &\scriptstyle  (D^2+1)^3f &\scriptstyle\dots& \scriptstyle D^{2r-6}(D^2+1)^3f &\scriptstyle  D^{2r-4}(D^2+1)^3f\\
		\scriptstyle  (D^2+1)^2f &\scriptstyle  (D^2+1)^3f &\scriptstyle  (D^2+1)^4f &\scriptstyle\dots& \scriptstyle D^{2r-6}(D^2+1)^{4}f &\scriptstyle  D^{2r-4}(D^2+1)^{4}f\\
		\scriptstyle \vdots &\scriptstyle \vdots  & \scriptstyle \vdots & \scriptstyle\ddots &\scriptstyle \vdots & \scriptstyle \vdots\\
		\scriptstyle  (D^2+1)^{m}f &\scriptstyle (D^2+1)^{m+1}f &\scriptstyle  (D^2+1)^{m+2}f &\scriptstyle\dots& \scriptstyle D^{2r-6}(D^2+1)^{m+2}f &\scriptstyle  D^{2r-4}(D^2+1)^{m+2}f\\
		\scriptstyle \vdots &\scriptstyle \vdots  & \scriptstyle \vdots & \scriptstyle\ddots &\scriptstyle \vdots & \scriptstyle \vdots\\
		\scriptstyle  (D^2+1)^{r-1}f &\scriptstyle  (D^2+1)^{r}f &\scriptstyle  (D^2+1)^{r+1}f &\scriptstyle\dots& \scriptstyle D^{2r-6}(D^2+1)^{r+1}f &\scriptstyle D^{2r-4}(D^2+1)^{r+1}f\\
		\scriptstyle  (D^2+1)^{r}f &\scriptstyle  (D^2+1)^{r+1}f &\scriptstyle (D^2+1)^{r+2}f &\scriptstyle\dots& \scriptstyle D^{2r-6}(D^2+1)^{r+2}f &\scriptstyle  D^{2r-4}(D^2+1)^{r+2}f
	\end{array}\right|\label{eq:2sdet}
\end{gather}
The $k$-th step ($k=1,2,\dots,r$): 
\begin{gather}
\left|\mbf{R}_r\dots\mbf{R}_2\mbf{R}_1\mbf{Z}\mbf{R}_1^T\mbf{R}_2^T\dots\mbf{R}_k^T\right| =\notag\\
	\left|\arraycolsep=0.3pt\def\arraystretch{1.2}\begin{array}{cccccccc}
		\scriptstyle  f &\scriptstyle \dots &\scriptstyle  (D^2+1)^{k-1}f & \scriptstyle (D^2+1)^kf & \scriptstyle D^2(D^2+1)^kf& \scriptstyle \dots & \scriptstyle  D^{2(r-k)}(D^2+1)^kf\\
		\scriptstyle  (D^2+1)f &\scriptstyle \dots &\scriptstyle  (D^2+1)^kf &\scriptstyle  (D^2+1)^{k+1}f & \scriptstyle D^2(D^2+1)^{k+1}f &\scriptstyle \dots & \scriptstyle D^{2(r-k)}(D^2+1)^{k+1}f\\
		\scriptstyle  (D^2+1)^2f &\scriptstyle \dots &\scriptstyle  (D^2+1)^{k+1}f &\scriptstyle  (D^2+1)^{k+2}f & \scriptstyle D^2(D^2+1)^{k+2}f &\scriptstyle \dots & \scriptstyle D^{2(r-k)}(D^2+1)^{k+2}f\\		
		\scriptstyle \vdots &\scriptstyle \ddots  & \scriptstyle \vdots & \scriptstyle\vdots &\scriptstyle \vdots & \scriptstyle \ddots &\scriptstyle \vdots \\
		\scriptstyle  (D^2+1)^{m}f &\scriptstyle\dots &\scriptstyle  (D^2+1)^{m+k-1}f &\scriptstyle  (D^2+1)^{m+k}f&\scriptstyle  D^2(D^2+1)^{m+k}f &\scriptstyle\dots& \scriptstyle D^{2(r-k)}(D^2+1)^{m+k}f\\
		\scriptstyle \vdots &\scriptstyle \ddots  & \scriptstyle \vdots & \scriptstyle\vdots &\scriptstyle \vdots & \scriptstyle\ddots &\scriptstyle \vdots\\
		\scriptstyle  (D^2+1)^{r-1}f &\scriptstyle\dots &\scriptstyle  (D^2+1)^{k+r-2}f &\scriptstyle  (D^2+1)^{k+r-1}f&\scriptstyle  D^2(D^2+1)^{k+r-1}f&\scriptstyle\dots& \scriptstyle D^{2(r-k)}(D^2+1)^{k+r-1}f\\
\scriptstyle  (D^2+1)^{r}f &\scriptstyle \dots &\scriptstyle (D^2+1)^{k+r-1}f &\scriptstyle (D^2+1)^{k+r}f& \scriptstyle D^2(D^2+1)^{k+r}f&\scriptstyle \dots &\scriptstyle  D^{2(r-k)}(D^2+1)^{k+r}f
	\end{array}\right|\label{eq:ksdet}
\end{gather}
Finally, if $k=r$ we get the determinant (\ref{eq:ase}).\par
If we denote $\mbf{R}=\mbf{R}_r\mbf{R}_{r-1}\dots\mbf{R}_2\mbf{R}_1$ then
\begin{equation}\label{eq:reslem2}
|\mbf{Z}|=|\mbf{R}\mbf{Z}\mbf{R}^T|
\end{equation}
\end{proof}
\end{lemma}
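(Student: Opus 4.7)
My plan is to exploit the elementary row and column operation matrices $\mbf{R}_k$ from Lemma \ref{defrk}, which preserve the determinant by the Remark. Observe that the $(i,j)$-entry of $\mbf{Z}$ is $D^{2(i+j-2)}f$ while the target $(i,j)$-entry on the right-hand side of (\ref{eq:ase}) is $(D^2+1)^{(i-1)+(j-1)}f$. The algebraic identity driving the whole transformation is
\[
D^{2n}f + D^{2(n-1)}f = D^{2(n-1)}(D^2+1)f,
\]
which says that adding two vertically or horizontally consecutive entries of $\mbf{Z}$ trades one factor of $D^2$ for one factor of $D^2+1$. This is exactly the arithmetic performed by the left action of $\mbf{R}_k$ described in (\ref{eq:rtran}) and by $\mbf{R}_k^T$ on columns via (\ref{eq:stran}).

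First, in the row phase, I would successively left-multiply by $\mbf{R}_1,\mbf{R}_2,\ldots,\mbf{R}_r$ and prove, by induction on $k$, the formula
\[
(\mbf{R}_k\cdots\mbf{R}_1\mbf{Z})_{ij}=\begin{cases}
D^{2(j-1)}(D^2+1)^{i-1}f & \text{if } i\le k,\\
D^{2(i+j-k-2)}(D^2+1)^{k}f & \text{if } i>k.
\end{cases}
\]
The base case $k=1$ is immediate from (\ref{eq:rtran}) applied to $\mbf{Z}$. For the inductive step, $\mbf{R}_{k+1}$ leaves rows $1,\ldots,k+1$ untouched and row $k+1$ already fits the extended ``$i\le k+1$'' branch; for $i>k+1$ the displayed identity turns the sum of the row-$i$ and row-$(i-1)$ entries into an entry with one more power of $(D^2+1)$. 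At $k=r$ every entry has the form $D^{2(j-1)}(D^2+1)^{i-1}f$, matching (\ref{eq:fin1prdet}).

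Second, I would run the symmetric column phase by right-multiplying successively by $\mbf{R}_1^T,\mbf{R}_2^T,\ldots,\mbf{R}_r^T$, using (\ref{eq:stran}) in place of (\ref{eq:rtran}). The parallel induction on $k$ converts each factor $D^{2(j-1)}$ into $(D^2+1)^{j-1}$ column by column, yielding at $k=r$ the entry $(D^2+1)^{(i-1)+(j-1)}f$, which is precisely the matrix on the right of (\ref{eq:ase}). Since $|\mbf{R}_k|=|\mbf{R}_k^T|=1$, each such multiplication preserves the determinant, giving the asserted equality and, with $\mbf{R}=\mbf{R}_r\cdots\mbf{R}_1$, the identity (\ref{eq:reslem2}).

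The only real obstacle is index bookkeeping: at each inductive step one must check that the ``frontier'' row or column just fixed automatically fits the completed branch of the piecewise formula, so that no extra base case intrudes. Once the identity $D^{2n}f+D^{2(n-1)}f=D^{2(n-1)}(D^2+1)f$ and the explicit shapes (\ref{eq:rtran}) and (\ref{eq:stran}) of the $\mbf{R}_k$-action are in hand, the remainder of the argument is purely mechanical.
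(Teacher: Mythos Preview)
Your proposal is correct and follows essentially the same approach as the paper's proof: both apply the row operations $\mbf{R}_1,\ldots,\mbf{R}_r$ and then the column operations $\mbf{R}_1^T,\ldots,\mbf{R}_r^T$, using the identity $D^{2n}f+D^{2(n-1)}f=D^{2(n-1)}(D^2+1)f$ at each step, and conclude via $|\mbf{R}_k|=1$. Your explicit piecewise formula for $(\mbf{R}_k\cdots\mbf{R}_1\mbf{Z})_{ij}$ is a slightly more compact packaging of the same induction that the paper writes out by displaying the full matrices at each stage.
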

\begin{remark}\label{rele1}
Let us put in (\ref{eq:ASD}) instead of $f$ the $D^\nu f$, $\nu=0,1,2,\dots$. We get for the determinant
\begin{gather}
	\left|\mbf{Z}^\ast\right|=\left| \begin{array}{ccccc}
		\scriptstyle  D^\nu f &\scriptstyle  D^2(D^\nu f) &\scriptstyle  D^4(D^\nu f) &\dots &\scriptstyle  D^{2r}(D^\nu f)\\
		\scriptstyle D^2(D^\nu f)&\scriptstyle D^4(D^\nu f)&\scriptstyle D^6(D^\nu f)  & \dots &\scriptstyle  D^{2r+2}(D^\nu f) \\
		\scriptstyle D^4(D^\nu f)&\scriptstyle D^6(D^\nu f)&\scriptstyle D^8(D^\nu f) & \dots &\scriptstyle D^{2r+4}(D^\nu f)\\
		\scriptstyle \vdots &\scriptstyle \vdots  & \scriptstyle \vdots & \scriptstyle\ddots & \scriptstyle \vdots\\
		\scriptstyle D^{2m}(D^\nu f) & \scriptstyle D^{2m+2}(D^\nu f)& \scriptstyle D^{2m+4}(D^\nu f)& \scriptstyle \dots & \scriptstyle D^{2(m+r)}(D^\nu f)\\
		\scriptstyle \vdots &\scriptstyle \vdots  & \scriptstyle \vdots & \scriptstyle\ddots & \scriptstyle \vdots\\
		\scriptstyle D^{2r-2}(D^\nu f) & \scriptstyle D^{2r}(D^\nu f)& \scriptstyle D^{2r+2}(D^\nu f)& \scriptstyle \dots &\scriptstyle D^{4r-2}(D^\nu f)\\
		\scriptstyle D^{2r}(D^\nu f) & \scriptstyle D^{2r+2}(D^\nu f)& \scriptstyle D^{2r+4}(D^\nu f)& \scriptstyle \dots &\scriptstyle D^{4r}(D^\nu f)
	\end{array}\label{eq:ASDast}\right|
\end{gather}
\begin{gather}
=\left| \begin{array}{ccccc}
	\scriptstyle  D^\nu f &\scriptstyle  D^\nu D^2f &\scriptstyle D^\nu D^4f &\dots &\scriptstyle  D^\nu D^{2r}f\\
	\scriptstyle D^\nu D^2f&\scriptstyle D^\nu D^4f&\scriptstyle D^\nu D^6f  & \dots &\scriptstyle  D^\nu D^{2r+2}f \\
	\scriptstyle D^\nu D^4f&\scriptstyle D^\nu D^6f&\scriptstyle D^\nu D^8f & \dots &\scriptstyle D^\nu D^{2r+4}f\\
	\scriptstyle \vdots &\scriptstyle \vdots  & \scriptstyle \vdots & \scriptstyle\ddots & \scriptstyle \vdots\\
	\scriptstyle D^\nu D^{2m}f & \scriptstyle D^\nu D^{2m+2}f& \scriptstyle D^\nu D^{2m+4} f& \scriptstyle \dots & \scriptstyle D^\nu D^{2(m+r)}f\\
	\scriptstyle \vdots &\scriptstyle \vdots  & \scriptstyle \vdots & \scriptstyle\ddots & \scriptstyle \vdots\\
	\scriptstyle D^\nu D^{2r-2}f & \scriptstyle D^\nu D^{2r}f& \scriptstyle D^\nu D^{2r+2} f& \scriptstyle \dots &\scriptstyle D^\nu D^{4r-2}f\\
	\scriptstyle D^\nu D^{2r}f & \scriptstyle D^\nu D^{2r+2}f& \scriptstyle D^\nu D^{2r+4} f& \scriptstyle \dots &\scriptstyle D^\nu D^{4r}f
\end{array}\label{eq:ASDast1}\right|
\end{gather}
\begin{gather}
=\left|\arraycolsep=0.15pt\def\arraystretch{1.2}
	\begin{array}{ccccc}
		\scriptstyle  D^\nu f &\scriptstyle  (D^2+1)(D^\nu f) &\scriptstyle  
		(D^2+1)^2(D^\nu f) &\dots &\scriptstyle  (D^2+1)^r(D^\nu f)\\
		\scriptstyle  (D^2+1)(D^\nu f) &\scriptstyle  (D^2+1)^2(D^\nu f) &\scriptstyle  (D^2+1)^3(D^\nu f) &\dots&\scriptstyle  (D^2+1)^{r+1}(D^\nu f)\\
		\scriptstyle  (D^2+1)^2(D^\nu f) &\scriptstyle  (D^2+1)^3(D^\nu f) &\scriptstyle  (D^2+1)^4(D^\nu f) &\dots & \scriptstyle  (D^2+1)^{r+2}(D^\nu f)\\
		\scriptstyle \vdots &\scriptstyle \vdots  & \scriptstyle \vdots & \scriptstyle\ddots &\scriptstyle \vdots\\
		\scriptstyle  (D^2+1)^{m}(D^\nu f) &\scriptstyle  (D^2+1)^{m+1}(D^\nu f) &\scriptstyle  (D^2+1)^{m+2}(D^\nu f) &\dots&\scriptstyle  (D^2+1)^{m+r}(D^\nu f)\\
		\scriptstyle \vdots &\scriptstyle \vdots  & \scriptstyle \vdots & \scriptstyle\ddots & \scriptstyle \vdots\\
		\scriptstyle  (D^2+1)^{r-1}(D^\nu f) &\scriptstyle  (D^2+1)^{r}(D^\nu f) &\scriptstyle  (D^2+1)^{r+1}(D^\nu f) &\dots&\scriptstyle  (D^2+1)^{2r-1}(D^\nu f)\\
		\scriptstyle  (D^2+1)^{r}(D^\nu f) &\scriptstyle  (D^2+1)^{r+1}(D^\nu f) &\scriptstyle  (D^2+1)^{r+2}(D^\nu f) &\dots &\scriptstyle  (D^2+1)^{2r}(D^\nu f)
	\end{array}\right|\label{eq:aseast}
\end{gather}
\begin{gather}
=\left|\arraycolsep=0.15pt\def\arraystretch{1.2}
\begin{array}{ccccc}
	\scriptstyle  D^\nu f &\scriptstyle  D^\nu(D^2+1)f &\scriptstyle  D^\nu(D^2+1)^2f &\dots &\scriptstyle  D^\nu(D^2+1)^rf\\
	\scriptstyle  D^\nu(D^2+1)f &\scriptstyle  D^\nu(D^2+1)^2f &\scriptstyle  D^\nu(D^2+1)^3 f &\dots&\scriptstyle  D^\nu(D^2+1)^{r+1}f\\
	\scriptstyle  D^\nu(D^2+1)^2f &\scriptstyle  D^\nu(D^2+1)^3f &\scriptstyle  D^\nu(D^2+1)^4f &\dots & \scriptstyle  D^\nu(D^2+1)^{r+2}f\\
	\scriptstyle \vdots &\scriptstyle \vdots  & \scriptstyle \vdots & \scriptstyle\ddots &\scriptstyle \vdots\\
	\scriptstyle  D^\nu(D^2+1)^{m}f &\scriptstyle  D^\nu(D^2+1)^{m+1}f &\scriptstyle  D^\nu(D^2+1)^{m+2}f &\dots&\scriptstyle  D^\nu(D^2+1)^{m+r}f\\
	\scriptstyle \vdots &\scriptstyle \vdots  & \scriptstyle \vdots & \scriptstyle\ddots & \scriptstyle \vdots\\
	\scriptstyle  D^\nu(D^2+1)^{r-1}f &\scriptstyle  D^\nu(D^2+1)^{r}f &\scriptstyle  D^\nu(D^2+1)^{r+1}f &\dots&\scriptstyle  D^\nu(D^2+1)^{2r-1}f\\
	\scriptstyle  D^\nu(D^2+1)^{r}f &\scriptstyle  D^\nu(D^2+1)^{r+1}f &\scriptstyle  D^\nu(D^2+1)^{r+2}f &\dots &\scriptstyle  D^\nu(D^2+1)^{2r}f
\end{array}\right|\label{eq:aseast1}
\end{gather}
\end{remark}

\begin{theorem}\label{wr}
Let $D$ be a differential operator and $f$ be a $4n-2$ ($n\in \mathbb{N}$) times differentiable function on an interval $I$, then the Wronskian 
$$W(f,Df,D^2f\dots,D^{2n-2}f,D^{2n-1}f)$$ 
has the relationship 
\begin{gather}
\scriptstyle W(f,Df,\dots,D^{2n-2}f,D^{2n-1}f)\notag \\[0.5ex]
=\left|\begin{array}{cccccc}
	\scriptstyle  f &\scriptstyle  Df &\scriptstyle  D^2f &\scriptstyle\dots& \scriptstyle D^{2n-2}f &\scriptstyle  D^{2n-1}f\\
	\scriptstyle Df &\scriptstyle D^2f &\scriptstyle D^3f & \scriptstyle \dots &\scriptstyle D^{2n-1}f  &\scriptstyle  D^{2n}f \\
	\scriptstyle D^2f &\scriptstyle D^3f &\scriptstyle D^4f & \scriptstyle\dots &\scriptstyle D^{2n}f &\scriptstyle D^{2n+1}f\\
	\scriptstyle \vdots &\scriptstyle \vdots  & \scriptstyle \vdots & \scriptstyle\ddots &\scriptstyle \vdots & \scriptstyle \vdots\\
	\scriptstyle D^{2n-2}f & \scriptstyle D^{2n-1}f& \scriptstyle D^{2n}f & \scriptstyle \dots & \scriptstyle D^{4n-4}f &\scriptstyle D^{4n-3}f\\
	\scriptstyle D^{2n-1}f & \scriptstyle D^{2n}f& \scriptstyle D^{2n+1}f & \scriptstyle \dots & \scriptstyle D^{4n-3}f &\scriptstyle D^{4n-2}f
\end{array} \right|\label{eq:det1}\\
=\left|\arraycolsep=0.3pt\def\arraystretch{1.2}
\begin{array}{ccccccc}
	\scriptstyle  f &\scriptstyle  Df &\scriptstyle  (D^2+1)f &\scriptstyle  D(D^2+1)f &\scriptstyle\dots& \scriptstyle (D^2+1)^{n-1}f &\scriptstyle  D(D^2+1)^{n-1}f\\
	\scriptstyle Df&\scriptstyle D^2f&\scriptstyle D(D^2+1)f & \scriptstyle D^2(D^2+1)f & \scriptstyle \dots &\scriptstyle D(D^2+1)^{n-1}f  &\scriptstyle  D^2(D^2+1)^{n-1}f \\
	\scriptstyle (D^2+1)f&\scriptstyle D(D^2+1)f&\scriptstyle (D^2+1)^2f&\scriptstyle D(D^2+1)^2f& \scriptstyle\dots &\scriptstyle (D^2+1)^nf&\scriptstyle D(D^2+1)^nf\\
	\scriptstyle D(D^2+1)f&\scriptstyle D^2(D^2+1)f&\scriptstyle D(D^2+1)^2f &\scriptstyle  D^2(D^2+1)^2f& \scriptstyle\dots & \scriptstyle D(D^2+1)^nf  & \scriptstyle D^2(D^2+1)^nf \\
	\scriptstyle \vdots &\scriptstyle \vdots  & \scriptstyle \vdots &\scriptstyle \vdots  & \scriptstyle\ddots &\scriptstyle \vdots & \scriptstyle \vdots\\
	\scriptstyle (D^2+1)^{n-1}f & \scriptstyle D(D^2+1)^{n-1}f& \scriptstyle (D^2+1)^nf& \scriptstyle D(D^2+1)^nf& \scriptstyle \dots & \scriptstyle (D^2+1)^{2n-2}f&\scriptstyle D(D^2+1)^{2n-2}f\\
	\scriptstyle D(D^2+1)^{n-1}f & \scriptstyle D^2(D^2+1)^{n-1}f& \scriptstyle D(D^2+1)^nf& \scriptstyle D^2(D^2+1)^nf& \scriptstyle \dots & \scriptstyle D(D^2+1)^{2n-2}f&\scriptstyle D^2(D^2+1)^{2n-2}f
\end{array} \right|\label{eq:det2}\\
=\left|w_{ij}\right|_{(2n)\times (2n)}=\mbf{W}\notag
\end{gather}
where 
\begin{displaymath}
	w_{ij}= \left\{\begin{array}{cl}
		(D^2+1)^kf & \textrm{if $i,j$ are both odd and $k=\frac{i+j}2-1$}\\[0.5ex]
		D^2(D^2+1)^kf & \textrm{if $i,j$ are both even and $k=\frac{i+j}{2}-2$}\\[0.5ex]
		D(D^2+1)^kf & \textrm{if $i+j$ is odd and $k=\frac{i+j-3}{2}$}
	\end{array} \right.
\end{displaymath}
\end{theorem}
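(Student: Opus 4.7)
The first equality in the theorem, identifying the Wronskian with (\ref{eq:det1}), is immediate from the definition of the Wronskian: the $(i,j)$-entry of $W(f,Df,\ldots,D^{2n-1}f)$ is $(D^{j-1}f)^{(i-1)}=D^{i+j-2}f$, which is exactly the Hankel matrix displayed in (\ref{eq:det1}). My effort will therefore be directed at proving the substantive equality $(\ref{eq:det1})=(\ref{eq:det2})$, and my plan is to reduce it to Lemma~\ref{lmm1} by separating the matrix along the parities of its row and column indices.

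Let $P$ be the permutation matrix corresponding to the reordering $1,3,5,\ldots,2n-1,2,4,6,\ldots,2n$; since $(\det P)^2=1$, the map $M\mapsto PMP^T$ preserves determinants. A direct index check based on $(PMP^T)_{k\ell}=M_{\pi(k),\pi(\ell)}$ shows that applying this conjugation to the matrix in (\ref{eq:det1}) produces the $2\times 2$ block form
\[
PWP^T=\begin{pmatrix}A & DA\\ DA & D^2A\end{pmatrix},\qquad A_{pq}=D^{2(p+q-2)}f,
\]
where $DA$ and $D^2A$ denote the matrices obtained from $A$ by applying $D$ or $D^2$ entrywise. Applying the same conjugation to $\mathbf{W}=[w_{ij}]$ of (\ref{eq:det2}), the three cases of the piecewise definition of $w_{ij}$ distribute themselves exactly onto the four blocks, yielding
\[
P\mathbf{W}P^T=\begin{pmatrix}A' & DA'\\ DA' & D^2A'\end{pmatrix},\qquad A'_{pq}=(D^2+1)^{p+q-2}f.
\]

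The Hankel matrix $A$ is precisely the matrix $\mathbf{Z}$ of Lemma~\ref{lmm1} with $r=n-1$, and $A'$ is the matrix on the right-hand side of (\ref{eq:ase}). Setting $\mathbf{R}:=\mathbf{R}_{n-1}\cdots\mathbf{R}_2\mathbf{R}_1$ (taken of order $n$), the lemma in the form (\ref{eq:reslem2}) gives $\mathbf{R}A\mathbf{R}^T=A'$. The key observation is that $\mathbf{R}$ has integer entries and so commutes with $D$: for any matrix $M$ of differentiable functions, $\mathbf{R}(DM)\mathbf{R}^T=D(\mathbf{R}M\mathbf{R}^T)$. Hence $\mathbf{R}(DA)\mathbf{R}^T=DA'$ and $\mathbf{R}(D^2A)\mathbf{R}^T=D^2A'$ --- the same conclusion delivered, from a different angle, by Remark~\ref{rele1} with $\nu=1$ and $\nu=2$.

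Putting the pieces together, set $\widehat{\mathbf{R}}:=\mathbf{R}\oplus\mathbf{R}$, so that $\det\widehat{\mathbf{R}}=1$ and
\[
\widehat{\mathbf{R}}\,(PWP^T)\,\widehat{\mathbf{R}}^T=\begin{pmatrix}A' & DA'\\ DA' & D^2A'\end{pmatrix}=P\mathbf{W}P^T,
\]
from which $\det W=\det\mathbf{W}$ follows. The main obstacle I foresee is not the reduction itself but the bookkeeping required to verify the two block decompositions --- in particular, to check that the three cases of the piecewise formula for $w_{ij}$ reassemble, under the permutation $\pi$, into precisely the four blocks $A'$, $DA'$, $DA'$, $D^2A'$. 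Once that is confirmed, the reduction to Lemma~\ref{lmm1} together with the trivial commutation of $D$ with a constant matrix finishes the proof.
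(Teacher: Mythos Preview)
Your proposal is correct and follows essentially the same strategy as the paper: both proofs separate the $2n\times 2n$ Hankel matrix by the parity of row and column indices into four $n\times n$ blocks and then invoke Lemma~\ref{lmm1} (together with Remark~\ref{rele1} for the $D$- and $D^2$-shifted blocks) on each. The only difference is organizational: the paper performs the row/column operations $R_{k-2}+R_k\to R_k$ on the full matrix (equivalently, multiplies by the $U_{k,2n}$ of the ``Another proof'') and then observes that these respect parity, whereas you conjugate by the permutation $P$ first and apply $\mathbf{R}\oplus\mathbf{R}$ block-diagonally---and indeed $P\,U_{k,2n}\,P^T=\mathbf{R}_k\oplus\mathbf{R}_k$, so the two transformations coincide.
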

\begin{proof} We will describe and prove a process involving using the row and column operations of (\ref{eq:det1}) that transforms the determinant (\ref{eq:det1}) into (\ref{eq:det2}). First, we will perform $n$ row procedures on the matrix of determinant, then the same number of column procedures after the resulting row procedures. 
We denote the operation of adding the p-th row (resp. column) to the q-th row (resp. column) by $R_p+R_q \to R_q$ (resp. $C_p+C_q \to C_q$).\par
The first procedure: We start with the matrix of determinant (\ref{eq:det1}).
$$R_{k-2}+R_k\to R_k, \quad k=2n,2n-1,\dots,4,3$$
The second procedure, on the matrix of determinant that we got after the first procedure.
$$R_{k-2}+R_k\to R_k, \quad k=2n,2n-1,\dots,6,5$$
The $r$-th procedure ($r=1,2,\dots,n-1$)
$$R_{k-2}+R_k\to R_k, \quad k=2n,2n-1,\dots,2r, 2r+1$$
The $n$-th procedure
$$R_{2n-2}+R_{2n}\to R_{2n}$$
We observe that we only add even rows to even and odd to odd rows.\\
With the resulting matrix of determinant, we perform analogous procedures with the columns.\\
The $r$-th procedure ($r=1,2,\dots,n-1$)
$$C_{k-2}+C_k\to C_k, \quad k=2n,2n-1,\dots,2r, 2r+1$$
The $n$-th procedure
$$C_{2n-2}+C_{2n}\to C_{2n}$$
In this case too, we observe that even columns are added only to even and odd columns to odd columns. It follows that the determinant of the matrix (\ref{eq:det1}) after the described row and column procedures can be calculated by first dividing the matrix of the determinant into four disjoint subdeterminants by choosing:
\begin{itemize}
	\item[-] odd rows and odd columns (next determinant OROC)
	\item[-] odd rows and even columns (next determinant OREC)
	\item[-] even rows and odd columns (next determinant EROC)
	\item[-] even rows and even columns (next determinant EREC)
\end{itemize}
and apply Lemma \ref{lmm1} to them. We then combine them into the resulting matrix of the determinant in the opposite way than they were divided.\\
We select entries that are in the odd rows and odd columns of (\ref{eq:det1}) and apply Lemma \ref{lmm1}:
\begin{gather}
OROC=\left|
\begin{array}{cccccc}
	\scriptstyle  f &\scriptstyle  D^2f &\scriptstyle  D^4f &\scriptstyle\dots& \scriptstyle D^{2n-4}f &\scriptstyle  D^{2n-2}f\\
	\scriptstyle D^2f &\scriptstyle D^4f &\scriptstyle D^6f & \scriptstyle \dots &\scriptstyle D^{2n-2}f  &\scriptstyle  D^{2n}f \\
	\scriptstyle D^4f &\scriptstyle D^6f &\scriptstyle D^8f & \scriptstyle\dots &\scriptstyle D^{2n}f &\scriptstyle D^{2n+2}f\\
	\scriptstyle \vdots &\scriptstyle \vdots  & \scriptstyle \vdots & \scriptstyle\ddots &\scriptstyle \vdots & \scriptstyle \vdots\\
	\scriptstyle D^{2n-4}f & \scriptstyle D^{2n-2}f& \scriptstyle D^{2n}f & \scriptstyle \dots & \scriptstyle D^{4n-8}f &\scriptstyle D^{4n-6}f\\
	\scriptstyle D^{2n-2}f & \scriptstyle D^{2n}f& \scriptstyle D^{2n+2}f & \scriptstyle \dots & \scriptstyle D^{4n-6}f &\scriptstyle D^{4n-4}f
\end{array} \right|\label{eq:OROC1}\\
=\left|\arraycolsep=1.5pt\def\arraystretch{1.2}
\begin{array}{cccccc}
	\scriptstyle  f &\scriptstyle  (D^2+1)f &\scriptstyle  (D^2+1)^2f &\scriptstyle\dots& \scriptstyle (D^2+1)^{n-2}f &\scriptstyle  (D^2+1)^{n-1}f\\
	\scriptstyle (D^2+1)f &\scriptstyle (D^2+1)^2f &\scriptstyle (D^2+1)^3f & \scriptstyle \dots &\scriptstyle (D^2+1)^{n-1}f  &\scriptstyle  (D^2+1)^nf \\
	\scriptstyle (D^2+1)^2f &\scriptstyle (D^2+1)^3f &\scriptstyle (D^2+1)^4f & \scriptstyle\dots &\scriptstyle (D^2+1)^nf &\scriptstyle (D^2+1)^{n+1}f\\
	\scriptstyle \vdots &\scriptstyle \vdots  & \scriptstyle \vdots & \scriptstyle\ddots &\scriptstyle \vdots & \scriptstyle \vdots\\
	\scriptstyle (D^2+1)^{n-2}f & \scriptstyle (D^2+1)^{n-1}f& \scriptstyle (D^2+1)^nf & \scriptstyle \dots & \scriptstyle (D^2+1)^{2n-4}f &\scriptstyle (D^2+1)^{2n-3}f\\
	\scriptstyle (D^2+1)^{n-1}f & \scriptstyle (D^2+1)^nf& \scriptstyle (D^2+1)^{n+1}f & \scriptstyle \dots & \scriptstyle (D^2+1)^{2n-3}f &\scriptstyle (D^2+1)^{2n-2}f
\end{array} \right|\label{eq:OROC2}
\end{gather}
We select entries that are in the odd rows and even columns of (\ref{eq:det1}) and apply the Remark \ref{rele1} (see (\ref{eq:ASDast1}) and (\ref{eq:aseast1})) for $\nu=1$. 
 So we get
\begin{gather}
OREC=\left|
\begin{array}{cccccc}
	\scriptstyle  Df &\scriptstyle  D^3f &\scriptstyle  D^5f &\scriptstyle\dots& \scriptstyle D^{2n-3}f &\scriptstyle  D^{2n-1}f\\
	\scriptstyle D^3f &\scriptstyle D^5f &\scriptstyle D^7f & \scriptstyle \dots &\scriptstyle D^{2n-1}f  &\scriptstyle  D^{2n+1}f \\
	\scriptstyle D^5f &\scriptstyle D^7f &\scriptstyle D^9f & \scriptstyle\dots &\scriptstyle D^{2n+1}f &\scriptstyle D^{2n+3}f\\
	\scriptstyle \vdots &\scriptstyle \vdots  & \scriptstyle \vdots & \scriptstyle\ddots &\scriptstyle \vdots & \scriptstyle \vdots\\
	\scriptstyle D^{2n-3}f & \scriptstyle D^{2n-1}f& \scriptstyle D^{2n+1}f & \scriptstyle \dots & \scriptstyle D^{4n-7}f &\scriptstyle D^{4n-5}f\\
	\scriptstyle D^{2n-1}f & \scriptstyle D^{2n+1}f& \scriptstyle D^{2n+3}f & \scriptstyle \dots & \scriptstyle D^{4n-5}f &\scriptstyle D^{4n-3}f
\end{array} \right|\label{eq:OREC1}
\end{gather}
\begin{gather}
=\left|
\begin{array}{cccccc}
	\scriptstyle  Df &\scriptstyle  DD^2f &\scriptstyle  DD^4f &\scriptstyle\dots& \scriptstyle DD^{2n-4}f &\scriptstyle  DD^{2n-2}f\\
	\scriptstyle DD^2f &\scriptstyle DD^4f &\scriptstyle DD^6f & \scriptstyle \dots &\scriptstyle DD^{2n-2}f  &\scriptstyle  DD^{2n}f \\
	\scriptstyle DD^4f &\scriptstyle DD^6f &\scriptstyle DD^8f & \scriptstyle\dots &\scriptstyle DD^{2n}f &\scriptstyle DD^{2n+2}f\\
	\scriptstyle \vdots &\scriptstyle \vdots  & \scriptstyle \vdots & \scriptstyle\ddots &\scriptstyle \vdots & \scriptstyle \vdots\\
	\scriptstyle DD^{2n-4}f & \scriptstyle DD^{2n-2}f& \scriptstyle DD^{2n}f & \scriptstyle \dots & \scriptstyle DD^{4n-8}f &\scriptstyle DD^{4n-6}f\\
	\scriptstyle DD^{2n-2}f & \scriptstyle DD^{2n}f& \scriptstyle DD^{2n+2}f & \scriptstyle \dots & \scriptstyle DD^{4n-6}f &\scriptstyle DD^{4n-4}f
\end{array} \right|\label{eq:OREC11}
\end{gather}
\begin{gather}
=\left|\arraycolsep=1pt\def\arraystretch{1.2}
\begin{array}{cccccc}
	\scriptstyle  Df &\scriptstyle  D(D^2+1)f &\scriptstyle  D(D^2+1)^2f &\scriptstyle\dots& \scriptstyle D(D^2+1)^{n-2}f &\scriptstyle  D(D^2+1)^{n-1}f\\
	\scriptstyle D(D^2+1)f &\scriptstyle D(D^2+1)^2f &\scriptstyle D(D^2+1)^3f & \scriptstyle \dots &\scriptstyle D(D^2+1)^{n-1}f  &\scriptstyle  D(D^2+1)^nf \\
	\scriptstyle D(D^2+1)^2f &\scriptstyle D(D^2+1)^3f &\scriptstyle D(D^2+1)^4f & \scriptstyle\dots &\scriptstyle D(D^2+1)^nf &\scriptstyle D(D^2+1)^{n+1}f\\
	\scriptstyle \vdots &\scriptstyle \vdots  & \scriptstyle \vdots & \scriptstyle\ddots &\scriptstyle \vdots & \scriptstyle \vdots\\
	\scriptstyle D(D^2+1)^{n-2}f & \scriptstyle D(D^2+1)^{n-1}f& \scriptstyle D(D^2+1)^nf & \scriptstyle \dots & \scriptstyle D(D^2+1)^{2n-4}f &\scriptstyle D(D^2+1)^{2n-3}f\\
	\scriptstyle D(D^2+1)^{n-1}f & \scriptstyle D(D^2+1)^nf& \scriptstyle D(D^2+1)^{n+1}f & \scriptstyle \dots & \scriptstyle D(D^2+1)^{2n-3}f &\scriptstyle D(D^2+1)^{2n-2}f
\end{array} \right|\label{eq:OREC2}
\end{gather}
We select entries that are in the even rows and odd columns of (\ref{eq:det1}) and apply Lemma \ref{lmm1}.
\begin{equation}\label{eq:EROC}
EROC=OROC
\end{equation}
We select entries that are in the even rows and even columns of (\ref{eq:det1}) and apply the Remark \ref{rele1} (see (\ref{eq:ASDast1}) and (\ref{eq:aseast1})) for $\nu=2$. We have
\begin{gather}
	EREC=\left|
	\begin{array}{cccccc}
		\scriptstyle  D^2f &\scriptstyle  D^4f &\scriptstyle  D^6f &\scriptstyle\dots& \scriptstyle D^{2n-2}f &\scriptstyle  D^{2n}f\\
		\scriptstyle D^4f &\scriptstyle D^6f &\scriptstyle D^8f & \scriptstyle \dots &\scriptstyle D^{2n}f  &\scriptstyle  D^{2n+2}f \\
		\scriptstyle D^6f &\scriptstyle D^8f &\scriptstyle D^10f & \scriptstyle\dots &\scriptstyle D^{2n+2}f &\scriptstyle D^{2n+4}f\\
		\scriptstyle \vdots &\scriptstyle \vdots  & \scriptstyle \vdots & \scriptstyle\ddots &\scriptstyle \vdots & \scriptstyle \vdots\\
		\scriptstyle D^{2n-2}f & \scriptstyle D^{2n}f& \scriptstyle D^{2n+2}f & \scriptstyle \dots & \scriptstyle D^{4n-6}f &\scriptstyle D^{4n-4}f\\
		\scriptstyle D^{2n}f & \scriptstyle D^{2n+2}f& \scriptstyle D^{2n+4}f & \scriptstyle \dots & \scriptstyle D^{4n-4}f &\scriptstyle D^{4n-2}f
	\end{array} \right|\label{eq:EREC1}
\end{gather}
\begin{gather}=\left|
\begin{array}{cccccc}
	\scriptstyle  D^2f &\scriptstyle  D^2D^2f &\scriptstyle  D^2D^4f &\scriptstyle\dots& \scriptstyle D^2D^{2n-4}f &\scriptstyle  D^2D^{2n-2}f\\
	\scriptstyle D^2D^2f &\scriptstyle D^2D^4f &\scriptstyle D^2D^6f & \scriptstyle \dots &\scriptstyle D^2D^{2n-2}f  &\scriptstyle  D^2D^{2n}f \\
	\scriptstyle D^2D^4f &\scriptstyle D^2D^6f &\scriptstyle D^2D^8f & \scriptstyle\dots &\scriptstyle D^2D^{2n}f &\scriptstyle D^2D^{2n+2}f\\
	\scriptstyle \vdots &\scriptstyle \vdots  & \scriptstyle \vdots & \scriptstyle\ddots &\scriptstyle \vdots & \scriptstyle \vdots\\
	\scriptstyle D^2D^{2n-4}f & \scriptstyle D^2D^{2n-2}f& \scriptstyle D^2D^{2n}f & \scriptstyle \dots & \scriptstyle D^2D^{4n-8}f &\scriptstyle D^2D^{4n-6}f\\
	\scriptstyle D^2D^{2n-2}f & \scriptstyle D^2D^{2n}f& \scriptstyle D^2D^{2n+2}f & \scriptstyle \dots & \scriptstyle D^2D^{4n-6}f &\scriptstyle D^2D^{4n-4}f
\end{array} \right|\label{eq:EREC11}\\
=\left|\arraycolsep=0.5pt\def\arraystretch{1.2}
\begin{array}{cccccc}
		\scriptstyle  D^2f &\scriptstyle  D^2(D^2+1)f &\scriptstyle  D^2(D^2+1)^2f &\scriptstyle\dots& \scriptstyle D^2(D^2+1)^{n-2}f &\scriptstyle  D^2(D^2+1)^{n-1}f\\
		\scriptstyle D^2(D^2+1)f &\scriptstyle D^2(D^2+1)^2f &\scriptstyle D^2(D^2+1)^3f & \scriptstyle \dots &\scriptstyle D^2(D^2+1)^{n-1}f  &\scriptstyle  D^2(D^2+1)^nf \\
		\scriptstyle D^2(D^2+1)^2f &\scriptstyle D^2(D^2+1)^3f &\scriptstyle D^2(D^2+1)^4f & \scriptstyle\dots &\scriptstyle D^2(D^2+1)^nf &\scriptstyle D^2(D^2+1)^{n+1}f\\
		\scriptstyle \vdots &\scriptstyle \vdots  & \scriptstyle \vdots & \scriptstyle\ddots &\scriptstyle \vdots & \scriptstyle \vdots\\
		\scriptstyle D^2(D^2+1)^{n-2}f & \scriptstyle D^2(D^2+1)^{n-1}f& \scriptstyle D^2(D^2+1)^nf & \scriptstyle \dots & \scriptstyle D^2(D^2+1)^{2n-4}f &\scriptstyle D^2(D^2+1)^{2n-3}f\\
		\scriptstyle D^2(D^2+1)^{n-1}f & \scriptstyle D^2(D^2+1)^nf& \scriptstyle D^2(D^2+1)^{n+1}f & \scriptstyle \dots & \scriptstyle D^2(D^2+1)^{2n-3}f &\scriptstyle D^2(D^2+1)^{2n-2}f
	\end{array} \right|\label{eq:EREC2}
\end{gather}
By inserting the entries of matrices of determinants (\ref{eq:OROC2}), (\ref{eq:OREC2}) (\ref{eq:EROC}) end (\ref{eq:EREC2}) into the $2n\times 2n$ matrix of the determinant in a reverse way, we get the determinant (\ref{eq:det2}).
\end{proof}

\textbf{Another proof of Theorem \ref{wr}}\par
\vspace{3mm}
Let us denote
\begin{equation}\label{Ukn}
\mbf{U}_{k,n}=[u_{k_{ij}}]_{n\times n}, \  u_{k_{i,j}}=\left\{\begin{array}{cl} 1 & \textrm{if $i=j$ or ($i=j+2$ and $2k+1\le i\le n$}) \\[0.5ex]
	0 & \textrm{otherwise} \end{array} \right.
\end{equation}
$k=1,2,\dots, \left[\frac{n+1}2\right]-1$, $n=3,4,\dots$

and let
\[\mbf{V}=
\left[\begin{array}{cccccc}
	\scriptstyle  f &\scriptstyle  Df &\scriptstyle  D^2f &\scriptstyle\dots& \scriptstyle D^{2n-2}f &\scriptstyle  D^{2n-1}f\\
	\scriptstyle Df &\scriptstyle D^2f &\scriptstyle D^3f & \scriptstyle \dots &\scriptstyle D^{2n-1}f  &\scriptstyle  D^{2n}f \\
	\scriptstyle D^2f &\scriptstyle D^3f &\scriptstyle D^4f & \scriptstyle\dots &\scriptstyle D^{2n}f &\scriptstyle D^{2n+1}f\\
	\scriptstyle \vdots &\scriptstyle \vdots  & \scriptstyle \vdots & \scriptstyle\ddots &\scriptstyle \vdots & \scriptstyle \vdots\\
	\scriptstyle D^{2n-2}f & \scriptstyle D^{2n-1}f& \scriptstyle D^{2n}f & \scriptstyle \dots & \scriptstyle D^{4n-4}f &\scriptstyle D^{4n-3}f\\
	\scriptstyle D^{2n-1}f & \scriptstyle D^{2n}f& \scriptstyle D^{2n+1}f & \scriptstyle \dots & \scriptstyle D^{4n-3}f &\scriptstyle D^{4n-2}f
\end{array} \right]\]
then
\begin{gather}
W(f,Df,D^2f,\dots,D^{2n-2}f,D^{2n-1}f)=|UVU^T|=\notag\\
	\left[\arraycolsep=0.3pt\def\arraystretch{1.2}
	\begin{array}{ccccccc}
		\scriptstyle  f &\scriptstyle  Df &\scriptstyle  (D^2+1)f &\scriptstyle  D(D^2+1)f &\scriptstyle\dots& \scriptstyle (D^2+1)^{n-1}f &\scriptstyle  D(D^2+1)^{n-1}f\\
		\scriptstyle Df&\scriptstyle D^2f&\scriptstyle D(D^2+1)f & \scriptstyle D^2(D^2+1)f & \scriptstyle \dots &\scriptstyle D(D^2+1)^{n-1}f  &\scriptstyle  D^2(D^2+1)^{n-1}f \\
		\scriptstyle (D^2+1)f&\scriptstyle D(D^2+1)f&\scriptstyle (D^2+1)^2f&\scriptstyle D(D^2+1)^2f& \scriptstyle\dots &\scriptstyle (D^2+1)^nf&\scriptstyle D(D^2+1)^nf\\
		\scriptstyle D(D^2+1)f&\scriptstyle D^2(D^2+1)f&\scriptstyle D(D^2+1)^2f &\scriptstyle  D^2(D^2+1)^2f& \scriptstyle\dots & \scriptstyle D(D^2+1)^nf  & \scriptstyle D^2(D^2+1)^nf \\
		\scriptstyle \vdots &\scriptstyle \vdots  & \scriptstyle \vdots &\scriptstyle \vdots  & \scriptstyle\ddots &\scriptstyle \vdots & \scriptstyle \vdots\\
		\scriptstyle (D^2+1)^{n-1}f & \scriptstyle D(D^2+1)^{n-1}f& \scriptstyle (D^2+1)^nf& \scriptstyle D(D^2+1)^nf& \scriptstyle \dots & \scriptstyle (D^2+1)^{2n-2}f&\scriptstyle D(D^2+1)^{2n-2}f\\
		\scriptstyle D(D^2+1)^{n-1}f & \scriptstyle D^2(D^2+1)^{n-1}f& \scriptstyle D(D^2+1)^nf& \scriptstyle D^2(D^2+1)^nf& \scriptstyle \dots & \scriptstyle D(D^2+1)^{2n-2}f&\scriptstyle D^2(D^2+1)^{2n-2}f
	\end{array} \right]\label{eq:UknV}
\end{gather}
where
$\mbf{U}=\mbf{U}_{n-1,2n}\mbf{U}_{n-2,2n}\dots \mbf{U}_{2,2n}\mbf{U}_{1,2n}$.

The formula (\ref{eq:UknV}) includes all the elementary rows and columns
 operations that were used in the proof of Theorem \ref{wr}.
\begin{theorem}\label{wrf}
Let $n$ be any fixed natural number and 
\begin{itemize}
\item[(i)] $f(x)=x^n\sin x$, then the functions
$$ f(x), Df(x), D^2f(x),\dots,D^{2n}f(x), D^{2n+1}f(x)$$
\item[(ii)] $g(x)=x^n\cos x$, then the functions
$$ g(x), Dg(x), D^2g(x),\dots,D^{2n}g(x), D^{2n+1}g(x)$$
\end{itemize}
are linearly independent. 
\end{theorem}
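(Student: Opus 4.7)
The plan is to apply Theorem~\ref{wr} with $n$ replaced by $n+1$, thereby rewriting
\[W\bigl(f,Df,\ldots,D^{2n+1}f\bigr)\]
as the determinant in (\ref{eq:det2}) (of size $2n+2$). The decisive property of $f(x)=x^n\sin x$ that I will exploit is that $(D^2+1)^{n+1}f=0$ while $(D^2+1)^n f\not\equiv 0$. To verify it, I would write $x^n\sin x=\mathrm{Im}(x^n e^{ix})$ and use the standard identities
\[(D-i)(e^{ix}h)=e^{ix}Dh,\qquad (D+i)(e^{ix}h)=e^{ix}(D+2i)h,\]
so that iteration gives $(D^2+1)^n(x^n e^{ix})=(D-i)^n(D+i)^n(x^n e^{ix})=e^{ix}D^n(D+2i)^n x^n=(2i)^n n!\,e^{ix}$. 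Taking imaginary parts and distinguishing $n\bmod 4$ yields $(D^2+1)^n f=\pm 2^n n!\sin x$ or $\pm 2^n n!\cos x$, which is not identically zero; applying $(D^2+1)$ once more then annihilates it.

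The key structural observation comes next. Reading off the three-case formula for $w_{ij}$ given in Theorem~\ref{wr}, one checks that the $(2n+2)\times(2n+2)$ matrix in (\ref{eq:det2}) naturally splits into an $(n+1)\times(n+1)$ grid of $2\times 2$ blocks, the block in position $(p,q)$ being exactly
\[G_{p+q-2},\qquad G_k:=\begin{pmatrix} (D^2+1)^k f & D(D^2+1)^k f\\ D(D^2+1)^k f & D^2(D^2+1)^k f\end{pmatrix}.\]
Thus $M=[G_{p+q-2}]_{p,q=1}^{n+1}$ is a block Hankel matrix. Since every entry of $G_k$ vanishes for $k\geq n+1$, the matrix $M$ is block anti-triangular: the blocks strictly to the lower right of the antidiagonal $p+q=n+2$ are all zero, and every block on that antidiagonal equals $G_n$.

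Finally, I would reverse the order of the $n+1$ block rows. Because each block spans two adjacent rows, this reversal realises a permutation of individual rows with $2(n+1)n$ inversions---always even---and therefore does not change the determinant. The reversed matrix is block \emph{lower} triangular with diagonal blocks all equal to $G_n$, giving
\[\det M=(\det G_n)^{n+1}.\]
Using $D^2(D^2+1)^n f=(D^2+1)^{n+1}f-(D^2+1)^n f=-(D^2+1)^n f$, the $2\times 2$ determinant reduces to
\[\det G_n=-\bigl[(D^2+1)^n f\bigr]^2-\bigl[D(D^2+1)^n f\bigr]^2,\]
and substituting the explicit form of $(D^2+1)^n f$ computed above collapses this, via $\sin^2 x+\cos^2 x=1$, to $\det G_n=-(2^n n!)^2$, a nonzero constant. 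Hence $W=(-1)^{n+1}(2^n n!)^{2(n+1)}\neq 0$, which proves case~(i). Case~(ii), with $g(x)=x^n\cos x=\mathrm{Re}(x^n e^{ix})$, is handled identically: taking real parts rather than imaginary parts swaps $\sin x\leftrightarrow\cos x$ in the expressions for $(D^2+1)^n g$ and $D(D^2+1)^n g$, and the Pythagorean cancellation leaves $\det G_n$ unchanged. I expect the main obstacle to be recognising the $2\times 2$ block structure in (\ref{eq:det2}) and verifying the identification of each block with $G_{p+q-2}$ through the parity case split in the definition of $w_{ij}$; once that is in hand, the remainder is block-triangular elimination plus a Pythagorean identity.
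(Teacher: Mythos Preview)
Your proof is correct and follows the same architectural line as the paper: both apply Theorem~\ref{wr} (with $n\to n+1$) to write the Wronskian as the $(2n+2)\times(2n+2)$ determinant~(\ref{eq:det2}), observe that $(D^2+1)^{k}f=0$ for $k\ge n+1$ forces a block anti-triangular shape, and conclude
\[
W(f,Df,\ldots,D^{2n+1}f)=\bigl(\det G_n\bigr)^{\,n+1},\qquad G_n=\begin{pmatrix}(D^2+1)^nf & D(D^2+1)^nf\\ D(D^2+1)^nf & D^2(D^2+1)^nf\end{pmatrix}.
\]
The paper reaches the same factorisation via iterated Laplace expansion on the displayed matrix~(\ref{uprmaindet}); your block-Hankel/block-reversal description is a cleaner way to say the same thing.

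The genuine divergence is in showing $\det G_n\neq 0$. The paper argues by contradiction: setting $y=(D^2+1)^nf$ and assuming $\det G_n=0$ gives the ODE $y\,D^2y-(Dy)^2=0$, whose general solution $y=C_1e^{C_2x}$ cannot lie in the span~(\ref{span}). You instead compute $(D^2+1)^n(x^ne^{ix})=(2i)^n n!\,e^{ix}$ directly via the shift rules $(D\mp i)(e^{ix}h)=e^{ix}(D\text{ or }D+2i)h$, and then the Pythagorean identity gives $\det G_n=-(2^n n!)^2$ exactly. Your route is more elementary (no ODE solution needed) and strictly more informative, since it produces the explicit value $W=(-1)^{n+1}(2^n n!)^{2(n+1)}$ rather than merely non-vanishing; the paper's ODE trick, on the other hand, requires no computation of $(D^2+1)^n f$ and would generalise more easily to settings where that closed form is unavailable.
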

\begin{proof}
The proof for cases (i) and (ii) is the same. For the sake of clarity of notation, let us solve case (i).
The Wronskian 
$$W(f,Df,D^2f\dots, D^{2n}f, D^{2n+1}f)$$
is a determinant of a $(2n+2)\times (2n+2)$ matrix.\par
It follows from Theorem \ref{wr} that
\begin{gather}
\begin{array}{ll}
\scriptstyle W(f,Df,D^2f\dots, D^{2n}f, D^{2n+1}f)\\[0.5ex]
&\\
=\left|\begin{array}{cccccc}
\scriptstyle  f &\scriptstyle  Df &\scriptstyle  (D^2+1)f &\scriptstyle \dots & \scriptstyle (D^2+1)^nf &\scriptstyle  D(D^2+1)^nf\\
\scriptstyle Df & \scriptstyle D^2f& \scriptstyle D(D^2+1)f & \scriptstyle \dots & \scriptstyle D(D^2+1)^nf  &\scriptstyle  D^2(D^2+1)^nf \\
\scriptstyle (D^2+1)f & \scriptstyle D(D^2+1)f & \scriptstyle (D^2+1)^2f & \scriptstyle \dots&\scriptstyle (D^2+1)^{n+1}f&\scriptstyle D(D^2+1)^{n+1}f\\
\scriptstyle D(D^2+1)f & \scriptstyle D^2(D^2+1)f & \scriptstyle D(D^2+1)^2f & \scriptstyle \dots&\scriptstyle D(D^2+1)^{n+1}f&\scriptstyle D^2(D^2+1)^{n+1}f\\
\scriptstyle \vdots &\scriptstyle \vdots&\scriptstyle \vdots &\scriptstyle \ddots & \scriptstyle \vdots  & \scriptstyle \vdots \\
\scriptstyle (D^2+1)^nf &\scriptstyle  D(D^2+1)^nf & \scriptstyle (D^2+1)^{n+1}f &\scriptstyle \dots  & \scriptstyle (D^2+1)^{2n}f & \scriptstyle D(D^2+1)^{2n}f\\
\scriptstyle D(D^2+1)^nf & \scriptstyle D^2(D^2+1)^nf& \scriptstyle D(D^2+1)^{n+1}f& \scriptstyle \dots & \scriptstyle D(D^2+1)^{2n}f&\scriptstyle D^2(D^2+1)^{2n}f
\end{array} \right|\\
\end{array}\label{maindet}
\end{gather}
 We have \hbox{$(D^2+1)^{n+1}f=0$} because $f(x)=x^n\sin x$ is a particular solution of the homogeneous differential equation $(D^2+1)^{n+1}f=0$. Also $(D^2+1)^kf= 0$ for $k>n+1$, but $(D^2+1)^kf\ne 0$ for $0\le k\le n$ because $f(x)=x^n\sin x$ is not the solution of the homogeneous differential equation $(D^2+1)^kf= 0$.\par
We will prove that
\begin{equation}\label{eq:mdet}
	\left|\begin{array}{cc}
		(D^2+1)^nf & D(D^2+1)^nf\\
		D(D^2+1)^nf  & D^2(D^2+1)^nf 
	\end{array} \right|\ne 0	
\end{equation}
Because $(D^2+1)^nf \ne 0$ and  $(D^2+1)^nf$ is not a linear function, we have
 $D(D^2+1)^nf\ne 0$ and  $D^2(D^2+1)^nf\ne 0$. 
Suppose the converse of the statement (\ref{eq:mdet}) is true, i.e.
$$(D^2+1)^nf\cdot D^2(D^2+1)^nf- (D(D^2+1)^nf)^2=0$$
We put $y=(D^2+1)^nf$, then we get the differential equation
\begin{equation}\label{DRT2} 
	yD^2y-(Dy)^2=0
\end{equation}
whose general solution is
\begin{equation}\label{eq:drov}
y=C_1e^{C_2x}
\end{equation}
where $C_1\ne 0$ since $(D^2+1)^nf=y\ne 0$. The solution (\ref{eq:drov}) is 
not in the vector space
\begin{equation}\label{span}
V=\text{span}\left(x^n\sin x, x^n\cos x,x^{n-1}\sin x,x^{n-1}\cos x,\dots,\sin x,\cos x\right)
\end{equation}
for any real numbers $C_1\ne 0$, $C_2$. So the equation (\ref{eq:mdet}) is true  and the determinant (\ref{maindet}) will have the form (\ref{uprmaindet})
\begin{landscape}
\begin{equation}\label{uprmaindet}
	\begin{array}{rl}
		&\scriptstyle W(f,Df,D^2,\dots, D^{2n}f,D^{2n+1}f)=\\[0.5ex]
		&\left|\begin{array}{ccccccccc}
			\scriptstyle{f} & \scriptstyle{Df} & \scriptstyle\dots 
			&\scriptstyle (D^2+1)^{n-2}f & \scriptstyle D(D^2+1)^{n-2}f
			&\scriptstyle (D^2+1)^{n-1}f & \scriptstyle D(D^2+1)^{n-1}f & \scriptstyle{(D^2+1)^nf} & \scriptstyle{D(D^2+1)^nf}\\
			
			\scriptstyle{Df}&\scriptstyle{D^2f} &\scriptstyle \dots
			&\scriptstyle{D(D^2+1)^{n-2}f} &\scriptstyle D^2(D^2+1)^{n-2}f 
			&\scriptstyle{D(D^2+1)^{n-1}f} &\scriptstyle D^2(D^2+1)^{n-1}f 
			& \scriptstyle{D(D^2+1)^nf}  & \scriptstyle{D^2(D^2+1)^nf} \\
			
			\scriptstyle{(D^2+1)f}&\scriptstyle{D(D^2+1)f}&\scriptstyle \dots
			& \scriptstyle (D^2+1)^{n-1}f&\scriptstyle D(D^2+1)^{n-1}f
			& \scriptstyle (D^2+1)^{n}f&\scriptstyle D(D^2+1)^{n}f
			&\scriptstyle{0} &\scriptstyle{0}\\
						
			\scriptstyle{D(D^2+1)f}&\scriptstyle{D^2(D^2+1)f}&\scriptstyle\dots
			& \scriptstyle{D(D^2+1)^{n-1}f}&\scriptstyle{D^2(D^2+1)^{n-1}f} 
			& \scriptstyle{D(D^2+1)^nf}&\scriptstyle{D^2(D^2+1)^nf} 
			&\scriptstyle{0}  &\scriptstyle{0}\\
			
			\scriptstyle{(D^2+1)^2f}&\scriptstyle{D(D^2+1)^2f}&\scriptstyle\dots
			& \scriptstyle{(D^2+1)^{n}f}&\scriptstyle{D(D^2+1)^{n}f} 
			& \scriptstyle{0} &\scriptstyle{0}
			&\scriptstyle{0}  &\scriptstyle{0}\\
			
			\scriptstyle{D(D^2+1)^2f}&\scriptstyle{D^2(D^2+1)^2f}&\scriptstyle\dots
			& \scriptstyle{D(D^2+1)^{n}f}&\scriptstyle{D^2(D^2+1)^{n}f} 
			& \scriptstyle{0} &\scriptstyle{0}
			&\scriptstyle{0}  &\scriptstyle{0}\\
			
			\scriptstyle \vdots & 	\scriptstyle \vdots & 	\scriptstyle \ddots & 
			\scriptstyle \vdots & 	\scriptstyle \vdots & 	\scriptstyle \vdots & 
			\scriptstyle \vdots & 	\scriptstyle \vdots & 	\scriptstyle \vdots \\
			
			\scriptstyle{(D^2+1)^nf} & \scriptstyle{D(D^2+1)^nf} & \scriptstyle{0} &\scriptstyle{0} & \scriptstyle{0} & \scriptstyle{0}& \scriptstyle{0}
			&\scriptstyle 0 & \scriptstyle 0\\
			
			\scriptstyle{D(D^2+1)^nf} & \scriptstyle{D^2(D^2+1)^nf} & \scriptstyle{0} &\scriptstyle{0} & \scriptstyle{0} & \scriptstyle{0}& \scriptstyle{0}
			&\scriptstyle 0 & \scriptstyle 0\\		
		\end{array} \right|
	\end{array}
\end{equation}
\end{landscape}
\noindent which is not equal to zero with respect to (\ref{eq:mdet}).
The determinant can be evaluated by $n$-fold use of the Laplace generalized expansion. So 
\begin{displaymath}
W(f,Df,D^2f\dots, D^{2n}f, D^{2n+1}f)=
	\left|\begin{array}{cc}
		(D^2+1)^nf & D(D^2+1)^nf\\
		D(D^2+1)^nf  & D^2(D^2+1)^{n}f 
	\end{array} \right|^{n+1}\ne 0	
\end{displaymath}
\par
In the case (ii) will be  $(D^2+1)^kg=~0$ for $k\ge n+1$, $(D^2+1)^ng\ne 0$ for 
$0\le k\le n$ and
\begin{displaymath}
	W(g,Dg,D^2g\dots, D^{2n}g, D^{2n+1}g)=
	\left|\begin{array}{cc}
		(D^2+1)^ng & D(D^2+1)^ng\\
		D(D^2+1)^ng  & D^2(D^2+1)^{n}g 
	\end{array} \right|^{n+1}\ne 0	
\end{displaymath}
This proves Theorem \ref{wrf}, because Wronskians are nonzero.
\end{proof}
\begin{theorem}\label{lincom} 
Let $n$ be any fixed natural number and 
\begin{itemize}
	\item[(i)] $f(x)=x^n\sin x$, then the functions
	$$ f(x), Df(x), D^2f(x),\dots,D^{2n+1}f(x), D^{2n+2}f(x)$$
	\item[(ii)] $g(x)=x^n\cos x$, then the functions
	$$ g(x), Dg(x), D^2g(x),\dots,D^{2n+1}g(x), D^{2n+2}g(x)$$
\end{itemize}
are linearly dependent.
\end{theorem}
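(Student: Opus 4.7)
The plan is to exploit the fact that $f(x)=x^{n}\sin x$ (respectively $g(x)=x^{n}\cos x$) is annihilated by the constant-coefficient differential operator $(D^{2}+1)^{n+1}$, which has order $2n+2$. The solution space
\[
V=\ker\bigl((D^{2}+1)^{n+1}\bigr)=\mathrm{span}\bigl(\sin x,\cos x,x\sin x,x\cos x,\dots,x^{n}\sin x,x^{n}\cos x\bigr)
\]
has dimension $2n+2$, as follows from the characteristic equation $(r^{2}+1)^{n+1}=0$ having the two roots $\pm i$ each of multiplicity $n+1$. In particular, $f\in V$ and $g\in V$.

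The second step is to observe that the operator $D$ commutes with $(D^{2}+1)^{n+1}$, since both are polynomials in $D$ with constant coefficients. Consequently $V$ is invariant under $D$: if $h\in V$ then $(D^{2}+1)^{n+1}(Dh)=D\bigl((D^{2}+1)^{n+1}h\bigr)=0$, so $Dh\in V$. By induction, every derivative $D^{j}f$ lies in $V$ for $j=0,1,2,\dots$, and likewise for $g$.

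The conclusion is then immediate from dimension counting: the list
\[
f,Df,D^{2}f,\dots,D^{2n+2}f
\]
consists of $2n+3$ elements of the $(2n+2)$-dimensional vector space $V$, hence must be linearly dependent. The identical argument with $g$ in place of $f$ settles case (ii). There is essentially no obstacle here, because the entire content of the theorem is that the minimal polynomial $(D^{2}+1)^{n+1}$ of $f$ (and of $g$) has degree exactly $2n+2$, which in particular can be read off from the proof of Theorem~\ref{wrf}, where it is used that $(D^{2}+1)^{n+1}f=0$ while $(D^{2}+1)^{n}f\neq 0$. One can optionally exhibit the explicit dependence relation by expanding $(D^{2}+1)^{n+1}f=0$ via the binomial theorem:
\[
\sum_{k=0}^{n+1}\binom{n+1}{k}D^{2k}f=0,
\]
which is a nontrivial linear relation among $f,D^{2}f,D^{4}f,\dots,D^{2n+2}f$, and therefore a fortiori among the larger list $f,Df,\dots,D^{2n+2}f$.
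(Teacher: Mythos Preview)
Your proof is correct and is considerably more direct than the paper's own argument. The paper proceeds by computing the Wronskian $W(f,Df,\dots,D^{2n+2}f)$: it applies the row/column transformation $|\mbf{U}\mbf{V}\mbf{U}^T|$ (with $\mbf{U}$ built from the matrices $\mbf{U}_{k,2n+3}$ of (\ref{Ukn})) to rewrite the Wronskian in terms of the entries $D^\nu(D^2+1)^k f$, observes that the last row and last column of the transformed matrix consist entirely of expressions with $(D^2+1)^{n+1}f$ as a factor and are therefore zero, and concludes that the Wronskian vanishes. From this the paper infers linear dependence.

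Your route bypasses the Wronskian entirely: you use the same underlying fact $(D^2+1)^{n+1}f=0$, but you exploit it either by a dimension count (all $D^j f$ lie in the $(2n+2)$-dimensional kernel, and there are $2n+3$ of them) or, even more explicitly, by expanding the annihilator to exhibit the relation $\sum_{k=0}^{n+1}\binom{n+1}{k}D^{2k}f=0$. This is shorter, and it also sidesteps a subtlety in the paper's argument: vanishing of the Wronskian does not in general imply linear dependence, so the paper's last sentence tacitly relies on the analyticity of the functions involved, whereas your argument produces a nontrivial linear relation outright. What the paper's approach buys is consistency with the surrounding machinery (Theorems~\ref{wr} and~\ref{wrf}), so that independence and dependence are both read off from the same transformed determinant; your approach buys simplicity and logical self-containment.
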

\begin{proof}
Let us denote 
\[\mbf{V}=
\left[\begin{array}{cccccc}
	\scriptstyle  f &\scriptstyle  Df &\scriptstyle  D^2f &\scriptstyle\dots& \scriptstyle D^{2n+1}f &\scriptstyle  D^{2n+2}f\\
	\scriptstyle Df &\scriptstyle D^2f &\scriptstyle D^3f & \scriptstyle \dots &\scriptstyle D^{2n+2}f  &\scriptstyle  D^{2n+3}f \\
	\scriptstyle D^2f &\scriptstyle D^3f &\scriptstyle D^4f & \scriptstyle\dots &\scriptstyle D^{2n+3}f &\scriptstyle D^{2n+4}f\\
	\scriptstyle \vdots &\scriptstyle \vdots  & \scriptstyle \vdots & \scriptstyle\ddots &\scriptstyle \vdots & \scriptstyle \vdots\\
	\scriptstyle D^{2n+1}f & \scriptstyle D^{2n+2}f& \scriptstyle D^{2n+3}f & \scriptstyle \dots & \scriptstyle D^{4n+2}f &\scriptstyle D^{4n+3}f\\
	\scriptstyle D^{2n+2}f & \scriptstyle D^{2n+3}f& \scriptstyle D^{2n+4}f & \scriptstyle \dots & \scriptstyle D^{4n+3}f &\scriptstyle D^{4n+4}f
\end{array} \right]\]
then
\begin{gather}
	\begin{array}{ll}
 W(f,Df,D^2f\dots, D^{2n+1}f, D^{2n+2}f)=|UVU^T|\\[0.5ex]
		&\\
		=\left|\begin{array}{cccccc}
			\scriptstyle  f &\scriptstyle  Df &\scriptstyle  (D^2+1)f &\scriptstyle \dots & \scriptstyle D(D^2+1)^nf &\scriptstyle  (D^2+1)^{n+1}f\\
			\scriptstyle Df & \scriptstyle D^2f& \scriptstyle D(D^2+1)f & \scriptstyle \dots & \scriptstyle D^2(D^2+1)^nf  &\scriptstyle  D(D^2+1)^{n+1}f \\
			\scriptstyle (D^2+1)f & \scriptstyle D(D^2+1)f & \scriptstyle (D^2+1)^2f & \scriptstyle \dots&\scriptstyle D(D^2+1)^{n+1}f&\scriptstyle (D^2+1)^{n+2}f\\
			\scriptstyle D(D^2+1)f & \scriptstyle D^2(D^2+1)f & \scriptstyle D(D^2+1)^2f & \scriptstyle \dots&\scriptstyle D^2(D^2+1)^{n+1}f&\scriptstyle D(D^2+1)^{n+2}f\\
			\scriptstyle \vdots &\scriptstyle \vdots&\scriptstyle \vdots &\scriptstyle \ddots & \scriptstyle \vdots  & \scriptstyle \vdots \\
			\scriptstyle D(D^2+1)^nf &\scriptstyle  D^2(D^2+1)^nf & \scriptstyle D(D^2+1)^{n+1}f &\scriptstyle \dots  & \scriptstyle D^2(D^2+1)^{2n}f & \scriptstyle D(D^2+1)^{2n+1}f\\
			\scriptstyle (D^2+1)^{n+1}f & \scriptstyle D(D^2+1)^{n+1}f& \scriptstyle (D^2+1)^{n+2}f& \scriptstyle \dots & \scriptstyle D(D^2+1)^{2n+1}f&\scriptstyle (D^2+1)^{2n+2}f
		\end{array} \right|\label{depend}\\
	&\\ =0
	\end{array}
\end{gather}
where \[U=U_{n+1,2n+3}U_{n,2n+3},\dots,U_{1,2n+3}\].\\
The Wronskian
\[ W(f,Df,D^2f\dots, D^{2n+1}f, D^{2n+2}f)=0\] because the last row and last column in (\ref{depend}) is null. Which proves that the functions $f,Df,D^2f\dots, D^{2n+1}f, D^{2n+2}f$ are linearly dependent. Similarly for the function $g$.
\end{proof}
\begin{theorem}\label{wrfk}
	Let $n$ be any fixed natural number, $k$ any fixed non-negative integer, $f(x)=x^n\sin x$ (resp. $g(x)=x^n\cos x$) then the functions
\begin{center}{$D^kf(x), D^{k+1}f(x), D^{k+2}f(x),\dots,D^{2n+k}f(x), D^{2n+k+1}f(x)$} \\ 
	(resp. $D^kg(x), D^{k+1}g(x), D^{k+2}g(x),\dots,D^{2n+k}g(x), D^{2n+k+1}g(x)$)\\
\end{center}
	are linearly independent. 
\end{theorem}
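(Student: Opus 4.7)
The plan is to mirror the argument of Theorem \ref{wrf}, replacing the function $f$ with $h := D^k f$. The Wronskian in question is exactly the Wronskian of $h, Dh, D^2 h, \ldots, D^{2n+1} h$, so I would apply the ``another proof'' version of Theorem \ref{wr} (the identity $W = |UVU^T|$) with $f$ replaced by $h$. This yields the block determinant of the form (\ref{eq:UknV}) in which every entry is of the shape $D^i(D^2+1)^m h$ for some $i\in\{0,1,2\}$ and $m\ge 0$.

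Next, I would exploit the commutativity $D^k(D^2+1)^m = (D^2+1)^m D^k$ to rewrite each entry as $D^i D^k(D^2+1)^m f$. Since $f = x^n\sin x$ is annihilated by $(D^2+1)^{n+1}$, all entries with $m\ge n+1$ vanish, so the matrix acquires exactly the same block-zero pattern as (\ref{uprmaindet}), with an extra inner factor $D^k$ sitting inside each surviving entry. Applying the generalized Laplace expansion $n+1$ times along the anti-diagonal of nonzero $2\times 2$ blocks, exactly as in Theorem \ref{wrf}, the Wronskian collapses to $M^{n+1}$, where
\[
M = \begin{vmatrix}
(D^2+1)^n(D^k f) & D(D^2+1)^n(D^k f)\\
D(D^2+1)^n(D^k f) & D^2(D^2+1)^n(D^k f)
\end{vmatrix}.
\]

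The main obstacle, and the only step that goes beyond bookkeeping, is showing $M\neq 0$. Setting $y := (D^2+1)^n(D^k f) = D^k\bigl((D^2+1)^n f\bigr)$, I would first verify that $(D^2+1)^n f$ is a nonzero $\mathbb{R}$-linear combination $\alpha\sin x + \beta\cos x$. This follows by a short induction on $n$ using the identity $(D^2+1)(x^m\sin x) = 2m\,x^{m-1}\cos x + m(m-1)\,x^{m-2}\sin x$ (and its cosine analogue), which shows each application of $(D^2+1)$ strictly lowers the polynomial degree of the factor while never producing zero until after exactly $n$ steps. Hence $y = D^k(\alpha\sin x + \beta\cos x) = A\sin x + B\cos x$ with $(A,B)\neq (0,0)$, because $D^k$ acts as a rotation on $\mathrm{span}\{\sin x,\cos x\}$ and preserves $\alpha^2+\beta^2$.

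Finally, since $D^2 y = -y$, a direct computation gives
\[
M = y\cdot D^2 y - (Dy)^2 = -y^2 - (Dy)^2 = -(A\sin x + B\cos x)^2 - (A\cos x - B\sin x)^2 = -(A^2+B^2) \neq 0.
\]
Thus the Wronskian equals $(-(A^2+B^2))^{n+1}\neq 0$, proving linear independence. The case $g(x) = x^n\cos x$ is handled by the identical argument, since $g$ is likewise annihilated by $(D^2+1)^{n+1}$ and $(D^2+1)^n g$ is again a nonzero element of $\mathrm{span}\{\sin x,\cos x\}$.
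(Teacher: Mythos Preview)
Your argument is correct and structurally parallel to the paper's: you apply Theorem~\ref{wr} to $h=D^kf$, use $(D^2+1)^{n+1}f=0$ to obtain the block-zero pattern of (\ref{uprmaindet}), and reduce the Wronskian to $M^{n+1}$ for the same $2\times2$ block $M$ that appears in (\ref{detT31}).

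Where you diverge from the paper is in showing $M\neq0$. The paper proceeds by contradiction: assuming $M=0$ and setting $y=D^k(D^2+1)^nf$, one obtains the ODE $yD^2y-(Dy)^2=0$, whose nonzero solutions $C_1e^{C_2x}$ do not lie in the span (\ref{span}). You instead identify $y$ explicitly: since $(D^2+1)^nf$ is a nonzero element of $\ker(D^2+1)=\mathrm{span}\{\sin x,\cos x\}$ and $D^k$ acts as a rotation there, $y=A\sin x+B\cos x$ with $A^2+B^2\neq0$, and then $M=-y^2-(Dy)^2=-(A^2+B^2)$ by direct computation. Your route is more elementary (no ODE-solving) and yields the exact value of the Wronskian, namely $\bigl(-(A^2+B^2)\bigr)^{n+1}$; the paper's route has the virtue of being applicable without first pinning down $y$ explicitly. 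A minor remark: your induction on degree-lowering is fine, but the shorter justification ``$(D^2+1)^nf\in\ker(D^2+1)$ and $(D^2+1)^nf\neq0$ because $x^n\sin x\notin\ker(D^2+1)^n$'' already suffices.
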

\begin{proof}
If we substitute the function $D^kf$ in Theorem \ref{wr} in relations (\ref{eq:det1}) and (\ref{eq:det2}) instead of $f$, we get

\begin{gather}
\begin{array}{ll}
\scriptstyle W(D^kf,D^{k+1}f,D^{k+2}f,\dots,D^{2n+k}f,D^{2n+k+1}f)\\[0.5ex]
&\\
\scriptstyle=\left|\arraycolsep=0.5pt\def\arraystretch{1.2}
\begin{array}{cccccc}
	\scriptstyle  D^kf &\scriptstyle  D^{k+1}f &\scriptstyle  D^k(D^2+1)f &\scriptstyle \dots & \scriptstyle D^k(D^2+1)^nf &\scriptstyle  D^{k+1}(D^2+1)^nf\\
	\scriptstyle D^{k+1}f & \scriptstyle D^{k+2}f& \scriptstyle D^{k+1}(D^2+1)f & \scriptstyle \dots & \scriptstyle D^{k+1}(D^2+1)^nf  &\scriptstyle  D^{k+2}(D^2+1)^nf \\
	\scriptstyle D^k(D^2+1)f & \scriptstyle D^{k+1}(D^2+1)f & \scriptstyle D^k(D^2+1)^2f & \scriptstyle \dots&\scriptstyle D^k(D^2+1)^{n+1}f&\scriptstyle D^{k+1}(D^2+1)^{n+1}f\\
	\scriptstyle D^{k+1}(D^2+1)f & \scriptstyle D^{k+2}(D^2+1)f & \scriptstyle D^{k+1}(D^2+1)^2f & \scriptstyle \dots&\scriptstyle D^{k+1}(D^2+1)^{n+1}f&\scriptstyle D^{k+2}(D^2+1)^{n+1}f\\
	\scriptstyle \vdots &\scriptstyle \vdots&\scriptstyle \vdots &\scriptstyle \ddots & \scriptstyle \vdots  & \scriptstyle \vdots \\
	\scriptstyle D^k(D^2+1)^nf &\scriptstyle  D^{k+1}(D^2+1)^nf & \scriptstyle D^k(D^2+1)^{n+1}f &\scriptstyle \dots  & \scriptstyle D^k(D^2+1)^{2n}f & \scriptstyle D^{k+1}(D^2+1)^{2n}f\\
	\scriptstyle (D^2+1)^{n+1}f & \scriptstyle D(D^2+1)^{n+1}f& \scriptstyle (D^2+1)^{n+2}f& \scriptstyle \dots & \scriptstyle D^{k+1}(D^2+1)^{2n}f&\scriptstyle D^{k+2}(D^2+1)^{2n}f
\end{array} \right|\\
\end{array}\label{detT3}
\end{gather}

We have obviously (similarly to the proof of Theorem 2)
$$(D^2+1)f\ne 0, (D^2+1)^2f\ne 0,\dots, (D^2+1)^{n}f\ne 0$$
and
$$(D^2+1)^{n+1}f= 0, (D^2+1)^{n+2}f= 0,\dots, (D^2+1)^{2n}f= 0$$
\begin{equation}\label{eq:3trmp}
D^k(D^2+1)^nf\ne 0\  (\mathrm{resp.}\ D^{k+1}(D^2+1)^nf\ne 0, \ D^{k+2}(D^2+1)^nf\ne 0)
\end{equation}
since $(D^2+1)^{n}f\ne 0$ so also $D^k(D^2+1)^{n}f\ne 0$ 
(resp. $D^{k+1}(D^2+1)^{n}f\ne 0$, $D^{k+2}(D^2+1)^{n}f\ne 0$). Otherwise, $(D^2+1)^{n}f$ would have to be a polynomial of degree at least $(k-1)$ (resp. $k$, $(k+1)$), not equal to zero This is not possible because there is no such polynomial in the vector space (\ref{span}).\par
We will prove that
\begin{equation}\label{detT31}
\left|\begin{array}{cc}
 D^k(D^2+1)^nf & D^{k+1}(D^2+1)^nf\\
D^{k+1}(D^2+1)^nf  &  D^{k+2}(D^2+1)^nf
\end{array}
\right|\ne 0
\end{equation}
Suppose the converse statement holds that
$$D^k(D^2+1)^nf\cdot D^{k+2}(D^2+1)^nf-(D^{k+1}(D^2+1)^nf)^2=0$$
We put $y=D^k(D^2+1)^nf$, as we saw above $y\ne 0$. \par
We have the same differential equation as (\ref{DRT2})
whose general solution is (\ref{eq:drov}), 
which is not in the vector space (\ref{span})
only if $C_1=0$, but $y\ne 0$, so $C_1\ne 0$. \par
Similarly to the previous Theorem, we calculate the determinant (\ref{detT31}), which will be not equal to zero and then calculate the modified determinant (\ref{detT3}) with respect to (\ref{detT31}), which will be nonzero. We have thus proved Theorem \ref{wrfk}.
\end{proof}

\section{A method using the definition of linear (in)dependence of functions and combinatorial identities}

In this section we will study only the linear (in)dependence of functions 
\begin{equation}\label{zklpos}
	Df(x), D^2f(x), D^3f(x),\dots, D^{2n+1}f(x),D^{2n+2}f(x)
\end{equation}
where 
\begin{equation}\label{xnsin}
	f(x)=x^n\sin x\end{equation}
only using the definition of linear (in)dependence of functions (see introduction)
without using the Wronskian. Theorem \ref{wrfk} implies that the functions (\ref{zklpos}) are linearly independent. Even so, we will study this sequence of functions from a different point of view.\par
Obviously the Laplace transform does not give a satisfactory answer about the linear independence of these functions.
\begin{theorem} \label{Laplace}
	Denote $f(x)=x^n\sin x$ as above, where $n$ is any fixed natural number. The 
	functions 
	\begin{equation}\label{eq:hah}
		Df(x), D^2f(x), \dots, D^nf(x), D^{n+1}f(x)
	\end{equation}
	are linearly independent.
\end{theorem}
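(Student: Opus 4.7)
My plan is to apply the Laplace transform to the putative linear relation and reduce the problem to a polynomial identity that fails on degree grounds. Suppose $\sum_{k=1}^{n+1} c_k D^k f(x) \equiv 0$. Writing $F(s) := \mathcal{L}\{f\}(s)$ and using the standard identity $\mathcal{L}\{D^k f\}(s) = s^k F(s) - \sum_{j=0}^{k-1} s^{k-1-j} f^{(j)}(0)$, the relation rearranges to
\[
P(s)\, F(s) = Q(s),
\]
where $P(s) := \sum_{k=1}^{n+1} c_k s^k$ is a polynomial of degree at most $n+1$ (vanishing at $s=0$), and $Q(s)$ is a polynomial of degree at most $n$ assembled from the initial values $f^{(j)}(0)$ for $0 \le j \le n$.

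The next step is to compute $F(s)$ in closed form. Starting from $\mathcal{L}\{\sin x\}(s) = 1/(s^2+1)$, applying the multiplication-by-$x^n$ rule $\mathcal{L}\{x^n h\}(s) = (-1)^n \mathcal{L}\{h\}^{(n)}(s)$, and differentiating via the partial-fraction split $\frac{1}{s^2+1} = \frac{1}{2i}\!\left(\frac{1}{s-i} - \frac{1}{s+i}\right)$, I expect to obtain
\[
F(s) = \frac{n!\, N(s)}{(s^2+1)^{n+1}}, \qquad N(s) := \frac{(s+i)^{n+1} - (s-i)^{n+1}}{2i},
\]
a real polynomial of degree exactly $n$. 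Direct evaluation gives $N(i) = (2i)^n$ and $N(-i) = (-1)^n (2i)^n$, both nonzero, so $\gcd\bigl(N(s),\, s^2+1\bigr) = 1$.

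Substituting this form of $F$ into $P(s) F(s) = Q(s)$ and clearing the denominator produces the polynomial identity
\[
n!\, P(s)\, N(s) = (s^2+1)^{n+1}\, Q(s).
\]
A degree count closes the argument: the left-hand side has degree at most $(n+1)+n = 2n+1$, whereas if $Q \not\equiv 0$ the right-hand side has degree at least $2n+2$. Hence $Q \equiv 0$, and then $P(s) N(s) \equiv 0$ together with $N \not\equiv 0$ forces $P \equiv 0$, i.e., $c_1 = \cdots = c_{n+1} = 0$. The only step that requires genuine care is the verification that $N$ has no root at $\pm i$ -- otherwise the coprimality used to make the degree mismatch sharp would fail; once that check is in hand the rest of the proof is essentially bookkeeping. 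Note that the very reason the approach succeeds here (namely, $\deg P \le n+1$ being strictly less than $\deg (s^2+1)^{n+1}$) is also why it becomes delicate for longer sequences, consistent with the remark preceding the theorem.
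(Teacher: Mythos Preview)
Your argument is correct, and it uses the same overarching idea as the paper --- take Laplace transforms and reduce to an identity among rational functions --- but you work considerably harder than necessary. The paper simply observes that for $f(x)=x^n\sin x$ one has $f(0)=Df(0)=\cdots=D^n f(0)=0$ (by Leibniz, the only nonvanishing term in $D^k f(0)$ for $k\le n$ would require differentiating $x^n$ exactly $n$ times, and then the remaining factor is $\sin 0=0$). Hence $\mathcal{L}\{D^k f\}(s)=s^kF(s)$ outright for $k=1,\dots,n+1$, and the linear independence of $sF(s),\dots,s^{n+1}F(s)$ is immediate since $F\not\equiv 0$. In your notation, this means $Q\equiv 0$ from the start, making the explicit computation of $F(s)$, the coprimality check $\gcd(N,s^2+1)=1$, and the degree count all superfluous. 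That said, your more explicit route has the merit of making transparent exactly \emph{why} the method stops at $n+1$ derivatives: for $k=n+2$ one would need $D^{n+1}f(0)$, which equals $(n+1)!\ne 0$, and then $Q$ genuinely appears and the degree inequality no longer closes.
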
 
\begin{proof} The functions (\ref{eq:hah}) are an exponential type and have all  derivatives for all values of $x\in \mathbb{R}$ and they are continuous. Then there exist Laplace transformations of $\mathcal{L}\{D^kf(t)\}$, for $k=1,2,\dots,n+1$. Denote  
$\mathcal{L}\{f(t)\}=F(s)$ then  
\[\begin{array}{l}
	\mathcal{L}\{Df(t)\}=sF(s)-f(0)=sF(s)\\
	\mathcal{L}\{D^2f(t)\}=s^2F(s)-sf(0)-Df(0)=s^2F(s)\\
	\qquad\vdots\\
	\mathcal{L}\{D^nf(t)\}=s^nF(s)-s^{n-1}f(0)-s^{n-2}Df(0)-\dots-D^{n-1}f(0)=s^nF(s)
	\\
	\mathcal{L}\{D^{n+1}f(t)\}=s^{n+1}F(s)-s^{n}f(0)-s^{n-1}Df(0)-\dots-D^nf(0)=s^{n+1}F(s)\\
	\textrm{where}\ f(0)=0,\ Df(0)=0,\ D^2f(0)=0,\ \dots,\ D^nf(0)= 0.
\end{array}	
\]
The functions 
$$sF(s),s^2F(s),\dots,s^{n+1}F(s)$$
are linearly independent. It follows from Lerch theorem \cite{SM} that the functions (\ref{eq:hah}) are linearly independent also. 
\end{proof}
\vspace{3mm}
\begin{remark}
Theorem \ref{Laplace} gives only incomplete information about the linear independence of the sequence of the derivatives of the function $x^n\sin x$.
\end{remark}
 Note that the first derivative of $f$ and each subsequent odd derivative of the function $f$ contain only members 
\[x^n\cos x,x^{n-1}\sin x,x^{n-2}\cos x,\dots, x\cos x,\sin x \]
in the linear combination.\newline
Similarly, the second derivative of $f$ and every other even derivative of this function contain only members 
\[x^n\sin x, x^{n-1}\cos x, x^{n-2}\sin x,\dots, x\sin x, \cos x\]
in the linear combination.
Therefore, it will be useful to choose (as we will see below) the basis  
\[B=\{x^n\cos x,x^n\sin x,x^{n-1}\sin x,x^{n-1}\cos x,
x^{n-2}\cos x,x^{n-2}\sin x,\dots,\sin x, \cos x\}.\] 
as the base of the vector space 
\begin{equation}\label{spanx}
V=\textrm{span}\{x^n\sin x, x^n\cos x, x^{n-1}\sin x, x^{n-1}\cos x, \dots, x\sin x, x\cos x,\sin x,\cos x\}
\end{equation}
Odd members of the basis $B$ (first, third, fifth, \ldots) are members odd derivatives of $f$; even members of the basis $B$ (second, fourth, sixth, \ldots) are members even derivatives of $f$ with decreasing powers of~$x$.\par  
All derivatives of the function $f$ belong to the vector space (\ref{spanx}).\par
We want to find out if there is at least one $\alpha_i\ne 0$ so that
\begin{equation}\label{eq:roa}
	\begin{array}{l}\alpha_1Df(x)+\alpha_2D^2f(x)+\dots+\alpha_kD^kf(x)+\dots\\
		+ \alpha_{2n+1}D^{2n+1}f(x)+\alpha_{2n+2}D^{2n+2}f(x)=0
	\end{array}
\end{equation}
where  $\alpha_i\in \mathbb{R},\,i=1,2,\dots,2n+2$ .\\ 
We express derivatives of the function $f$ in the basis $B$ (see \cite{F}) 
\begin{equation}\label{eq:roa1}
	\begin{array}{l}\alpha_1[Df]_B+\alpha_2[D^2f]_B+\dots+\alpha_k[D^kf]_B+\dots\\
		+ \alpha_{2n+1}[D^{2n+1}f]_B+
		\alpha_{2n+2}[D^{2n+2}f]_B=\mbf{0}
	\end{array}
\end{equation}
\begin{equation}\label{eq:roa2}
	\begin{array}{l}
\alpha_1(1,0,n,0,0,0,0,0,0,0,\dots)^T+
\alpha_2(0,-1,0,2n,0,(n)_{\underline{2}}, 0,0,0,0,\dots)^T\\
+\alpha_3(-1,0,-3n,0,3(n)_{\underline{2}},0,(n)_{\underline{3}},0,0,0,\dots)^T\\
+\alpha_4(0,1,0,-4n,0,-6(n)_{\underline{2}}, 0,4(n)_{\underline{3}},0,(n)_{\underline{4}}\dots)^T+\dots=\mbf{0}^T
	\end{array}
\end{equation}
The symbol T expresses the transpose of a vector.\par
 Or in the matrix form

\begin{displaymath}
	\begin{array}{rl}
		&\\[0.5ex]
		&\left[\begin{array}{ccccc}
			  1 & 0 & -1 &  0 
			& \dots \\
			 0 &  -1 &  0 &   1 
			& \dots \\
			 n &  0 &  -3n &  0 
			& \dots\\
			 0 & 2n & 0 & -4n 
			&  \dots\\
			 0 & 0 &  3(n)_{\underline{2}} 
			&  0 & \dots\\
			 0 &  (n)_{\underline{2}} &  0 
			&  -6(n)_{\underline{2}} & \dots\\
			 0 & 0 &  (n)_{\underline{3}} 
		    & 0	&\dots\\
			 0 &  0 &  0 
			&  4(n)_{\underline{3}} & \dots\\
			0 & 0 & 0 & 0 & \dots\\
			 0 &  0 &  0  
			&  (n)_{\underline{4}} &  \dots\\
			\vdots & \vdots & \vdots & \vdots &\ddots\\
		\end{array} \right]
	\left[\begin{array}{c}
		\alpha_1\\
		\alpha_2\\
		\alpha_3\\
		\vdots\\ 
		\alpha_{2n+2}
	\end{array} \right]=
\left[\begin{array}{c}
	0\\
	0\\
	0\\
	\vdots\\ 
	0
\end{array} \right]
\\
		&\\
		& =\mbf{A}_{(2n+2)\times(2n+2)}[a_{ij}]\mbf{\alpha}=\mbf{0}
	\end{array}
\end{displaymath}
where we denoted
$(x)_{\underline{n}}$ as the falling factorial defined by following
\footnote{We use the notation $(x)_n$ for the Pochhammer symbol.}
$$(x)_{\underline{n}}=\left\{ \begin{matrix} 1 & \textrm{for}\,n=0\\
	x(x-1)(x-2)\dots(x-n+1) & \textrm{for}\,n\ge 1\\	
\end{matrix} \right.$$
and
\begin{equation}\label{eq:haa}
	a_{ij}=\displaystyle{\frac{1+(-1)^{i+j}}{2}\left(-1\right)^{[\frac{2j+1}4]+
	[\frac{2i+1}8]}{j \choose [\frac{2i-1}4]}(n)_{\underline{[\frac{2i-1}4]}}}
\end{equation}
\[\mbf{\alpha}= \left[\begin{array}{c}
	\alpha_1\\
	\alpha_2\\
	\alpha_3\\
	\vdots\\ 
	\alpha_{2n+2}
\end{array} \right], \qquad
\mbf{0}=\left[\begin{array}{c}
	0\\
	0\\
	0\\
	\vdots\\ 
	0
\end{array} \right] \]\par
Using mathematical software and formula (\ref{eq:haa}) we generated the matrix $\mbf{A}$ for $f(x)=x^4\sin x$.
\[\mbf{A}=
\left[\begin{array}{cccccccccc}
 1 &  0 & -1 & 0 & 1 & 0  & -1 &  0 & 1 & 0 \\
  0 &  -1 & 0 & 1 &  0 &  -1  & 0 &  1 &  0 &  -1 \\
  4 &  0 & -12 & 0 &  20 &  0  & -28 &  0 &  36 &  0 \\
  0 &  8 & 0 & -16 &  0 &  24  & 0 &  -32 &  0 & 40 \\
  0 &  0 & 36 & 0 &  -120 &  0  & 252 &  0 &  -432 &  0 \\
  0 &  12 & 0 & -72 &  0 &  180  & 0 & -336 & 0 & 540 \\
  0 &  0 & 24 & 0 &  -240 &  0  & 840 &  0 & -2016 &  0 \\
  0 &  0 & 0 & 96 &  0 & -480  & 0 & 1344 & 0 & -2880 \\
  0 &  0 & 0 & 0 &  120 &  0  & -840 &  0 & 3024 &  0 \\
  0 &  0 & 0 & 24 &  0 & -360  & 0 & 1680 & 0 & -5040\\
\end{array} \right]\]

The construction of the elements $a_{ij}$ of the matrix $\mbf{A}$ is interesting.\newline
The member $\frac{1+(-1)^{i+j}}{2}$ of (\ref{eq:haa}) generates 0 in the matrix $\mbf{A}$ if i+j is odd, otherwise~1. \newline
The member 
$\left(-1\right)^{[\frac{2j+1}4]+[\frac{2i+1}8]}$ ensures the correct sign of the derivatives of function $f$ in the matrix $A$.
\newline
The most interesting is the construction of the last term 
${{j \choose [\frac{2i-1}4]}}
(n)_{\underline{[\frac{2i-1}4]}}$
in (\ref{eq:haa}), which follows from the Leibniz formula for the higher derivatives of the product of two functions. 

Now we perform some equivalent transformations (which do not change the rank of the matrix) on the matrix $\mbf{A}$.\\
Multiplying the $i$-th row of the matrix $\mbf{A}$ (for all 
$i = 1,2,\dots,2n+2$) with the number $$\displaystyle\frac{1}{(-1)^{[\frac{2i+1}8]}(n)_{\underline{[\frac{2i-1}4]}}}$$
we get the matrix $\mbf{A}^{\prime}$ equivalent to the matrix $\mbf{A}$
\begin{equation}\label{eq:hab}
	\begin{array}{c}
		\mbf{A}^\prime=[a'_{ij}]_{(2n+2)\times(2n+2)}\\[2ex]
		a^\prime_{ij}=\frac{1+(-1)^{i+j}}{2}\left(-1\right)^{[\frac{2j+1}4]}
		\displaystyle{j \choose [\frac{2i-1}4]}
	\end{array}
\end{equation}

And finally by multiplying the $j$-th row of the matrix $\mbf{A}^\prime$ (for all\newline
$j~=~1,2,\dots,2n+2$) with the number 
$$\frac{1}{\left(-1\right)^{[\frac{2j+1}4]}}$$
we get the matrix $\mbf{A}^{\prime\prime}$ equivalent to the matrix $\mbf{A}^{\prime}$
\begin{equation}\label{eq:hac}
	\begin{array}{c}
		\mbf{A}^{\prime\prime}=[a^{\prime\prime}_{ij}]_{(2n+2)\times(2n+2)}\\[2ex]
		a^{\prime\prime}_{ij}=\frac{1+(-1)^{i+j}}{2}
		\displaystyle {j \choose [\frac{2i-1}{4}]}
	\end{array}
\end{equation}
The matrix $\mbf{A}^{\prime\prime}$ has the form 

\begin{equation}\label{had}
	\mbf{A}^{\prime\prime}=\begin{bmatrix}
		{1\choose 0}&0&{3\choose 0}&0&{5\choose 0}&0&\dots& 
		{2n+1\choose 0}&0\\[1ex]
		0&{2\choose 0}&0&{4\choose 0}&0&{6\choose 0}&\dots&0& 
		{2n+2\choose 0}\\[1ex]
		{1\choose 1}&0&{3\choose 1}&0&{5\choose 1} &0&\dots&
		{2n+1\choose 1}&0&\\[1ex]
		0&{2\choose 1}&0&{4\choose 1}&0&{6\choose 1}&\dots&0& 
		{2n+2\choose 1}\\[1ex]
		{1\choose 2}&0&{3\choose 2}&0&{5\choose 2} &0&\dots&
		{2n+1\choose 2}&0&\\[1ex]
		\vdots&\vdots&\vdots&\vdots&\vdots&\vdots&\ddots&\vdots&\vdots\\[1ex]
		{1\choose n}&0&{3\choose n}&0&{5\choose n}&0&\dots&
		{2n+1\choose n}&0&\\[1ex]
		0&{2\choose n}&0&{4\choose n}&0&{6\choose n}&
		\dots&0&{2n+2\choose n}\\[1ex]
	\end{bmatrix}
\end{equation}
We need to prove that the matrix (\ref{had}) is regular.
\\
\begin{lemma}\label{ED}
\begin{equation}\label{eq:a}
	\sum_{k=1}^n\left(-\frac12\right)^{n-k}{2j-1\choose k-1}
	{2n-k-1\choose n-1}=2^{n-1}{j-1\choose n-1} 
\end{equation}
\end{lemma}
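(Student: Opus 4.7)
The plan is to establish the identity by computing, for fixed $j$, the ordinary generating function $P(t):=\sum_{n\ge 1}S_{n}\,t^{n-1}$ of the LHS, and showing $P(t)=(1+2t)^{j-1}$; the claim then follows by comparing coefficients of $t^{n-1}$ with the obvious expansion $(1+2t)^{j-1}=\sum_{n\ge 1}2^{n-1}\binom{j-1}{n-1}\,t^{n-1}$.

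First I would interchange the two summations (legitimate since the $k$-sum is truncated once $k>2j$) and substitute $m=n-k$, producing
\begin{equation*}
P(t)=\sum_{k\ge 1}\binom{2j-1}{k-1}\,t^{k-1}\sum_{m\ge 0}\binom{2m+k-1}{m}\!\left(-\tfrac{t}{2}\right)^{m}.
\end{equation*}
The inner sum is classical: Lagrange inversion applied to the Catalan equation $C=1+zC^{2}$, with $C(z)=(1-\sqrt{1-4z})/(2z)$, gives
\begin{equation*}
\sum_{m\ge 0}\binom{2m+r}{m}z^{m}=\frac{C(z)^{r}}{\sqrt{1-4z}}.
\end{equation*}
Applying this with $r=k-1$ and $z=-t/2$, so that $\sqrt{1-4z}=\sqrt{1+2t}$ and $t\,C(-t/2)=\sqrt{1+2t}-1$, one obtains after multiplying by $t^{k-1}$ the closed form $(\sqrt{1+2t}-1)^{k-1}/\sqrt{1+2t}$.

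The outer $k$-sum then collapses via the (finite) binomial theorem, which applies because $2j-1\in\mathbb{N}$:
\begin{equation*}
P(t)=\frac{1}{\sqrt{1+2t}}\sum_{m\ge 0}\binom{2j-1}{m}(\sqrt{1+2t}-1)^{m}=\frac{(\sqrt{1+2t})^{2j-1}}{\sqrt{1+2t}}=(1+2t)^{j-1},
\end{equation*}
which completes the argument. The main technical hurdle is recognizing and invoking the Catalan-type generating function above; everything else is purely formal manipulation of ordinary power series. An alternative finish (should one wish to avoid the Catalan identity) is to write $P(t)$ as the residue, at the root $x_{0}=-1+\sqrt{1+2t}$ of $x+x^{2}/2-t$ lying near $0$, of $(1+x)^{2j-1}/(x(1+x/2)-t)$, which evaluates directly to $(\sqrt{1+2t})^{2j-1}/\sqrt{1+2t}=(1+2t)^{j-1}$.
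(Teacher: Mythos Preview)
Your argument is correct. After the index shift $m=n-k$ and interchange of summation (valid at the level of formal power series in $t$, since each coefficient of $t^{N}$ receives only finitely many contributions), the inner sum is precisely the standard identity
\[
\sum_{m\ge 0}\binom{2m+r}{m}z^{m}=\frac{C(z)^{r}}{\sqrt{1-4z}},\qquad C(z)=\frac{1-\sqrt{1-4z}}{2z},
\]
specialised to $z=-t/2$; the remaining outer sum is the finite binomial expansion of $(1+(\sqrt{1+2t}-1))^{2j-1}$, giving $P(t)=(1+2t)^{j-1}$ as you state. Comparing coefficients yields the claim.

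This is a genuinely different route from the paper's. The paper rewrites the left-hand side as a terminating Gauss hypergeometric series ${}_2F_1\bigl(\begin{smallmatrix}1-n,\,1-2j\\ 2-2n\end{smallmatrix};2\bigr)$, then invokes a tabulated identity expressing such an ${}_2F_1$ through an associated Legendre function $P_{n-1}^{\,n-2j}$, and finally evaluates $P_{\nu}^{\mu}(0)$ via a Gamma-function formula and Legendre's duplication formula. Your generating-function approach is more elementary and self-contained: the only nontrivial input is the Catalan-type identity above, which follows directly from Lagrange inversion, whereas the paper's proof leans on two fairly heavy special-function identities from reference tables. On the other hand, the paper's method illustrates the general ``convert to hypergeometric and look it up'' strategy it advocates, and would adapt more mechanically to variants of the sum where the generating-function collapse is less transparent.
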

\begin{proof}
The method used to prove the combinatorial identity (\ref{eq:a}) was described in the article \cite{D}. The method is used to calculate summations in which binomials, Gamma functions and possibly Pochhammer symbols occur. The summation can be either finite or infinite. The characteristic of the method is to write the summation as a hypergeometric function. If the final hypergeometric function is known, then the result follows. Combinatorial identities can often be proven with this method.\\
We start by rewriting (\ref{eq:a}) as a summation from $k=0$ to $k=n-1$. This gives
\begin{equation}\label{eq:aa1}
	\sum_{k=0}^{n-1}\left(-\frac12\right)^{n-k-1}{2j-1\choose k}
	{2n-k-2\choose n-1}=2^{n-1}{j-1\choose n-1} 
\end{equation}

We convert the binomials to a quotient with a Pochhammer symbol by using
\begin{displaymath}
{a\choose i}=(-1)^i\frac{(-a)_i}{i!}
\end{displaymath}
Most of the used properties of the Gamma functions in combination with Pochhammer symbols are mentioned in appendix I of \cite{PBM}. Application to the left hand-side $L$ of (\ref{eq:aa1}) gives
\begin{displaymath}
L=\sum_{k=0}^{n-1}(-1)^k\frac{(1-2j)_k}{k!}(-1)^{n-1}\frac{(k+2-2n)_{n-1}}{(n-1)!}
\end{displaymath}
For the evaluation of the numerator in the second fraction we use\\
$\Gamma(a+k)=\Gamma(a)(a)_k$ and 
$\frac{\Gamma(-m)}{\Gamma(-n)}=(-1)^{n-m}\frac{\Gamma(n+1)}{\Gamma(m+1)}.$ \\
Application gives
\begin{displaymath}
	\begin{array}{rl}
(k+2-2n)_{n-1}&=\dfrac{\Gamma(k+1-n)}{\Gamma(k+2-2n)}=\dfrac{\Gamma(1-n)}{\Gamma(2-2n)}
\dfrac{(1-n)_k}{(2-2n)_k}\\
&=(-1)^{n-1}\dfrac{\Gamma(2n-1)}{\Gamma(n)}\dfrac{(1-n)_k}{(2-2n)_k}
	\end{array}
\end{displaymath}
Application to $L$ gives after some manipulations 
\[L=\left(-\frac12\right)^{n-1}\frac{\Gamma(2n-1)}{\Gamma(n)^2}\sum_{k=0}^{n-1}
\frac{(1-n)_k(1-2j)_k}{(2-2n)_k}\frac1{k!}2^k\]
The summation can be written as a hypergeometric function in the next formula
\[L=\left(-\frac12\right)^{n-1}\frac{\Gamma(2n-1)}{\Gamma(n)^2}\]
\cite[7.3.1(149)]{PBM} gives
\[{}_2F_1\left(\begin{matrix}-m,b\\
	-2m\end{matrix};x\right)= \frac{m!}{(2n)!}\Gamma(1-b)(-x)^m(x-1)^{-(m-b)/2}P_m^{m+b}\left(1-\frac2x\right) \]
$P_{\nu}^{\mu}$ is the associate Legendre function of the first kind. Properties of these functions can be found for example in \cite{GR}. Application gives
\[{}_2F_1\left(\begin{matrix}1-n,1-2j\\
2-2n\end{matrix};2\right)=\frac{(n-1)!}{(2n-2)!}\Gamma(2j)(-2)^{n-1}P_{n-1}^{n-2j}(0)\]
After some simplification we  get for  $L$
\[L=\frac1{\Gamma(n)}\Gamma(2j)P_{n-1}^{n-2j}(0)\]
Using the duplication formula of Legendre for the Gamma function gives
\[L=\frac1{\Gamma(n)}\dfrac{\Gamma(j)\Gamma\left(j+\frac12\right)2^{2j-1}}{\sqrt{\pi}}
P_{n-1}^{n-2j}(0)\]
For the associate Legendre function with zero argument we use \cite[8.756(1)]{GR}:
\[P_{\nu}^{\mu}(0)=\dfrac{2^\mu\sqrt{\pi}}{\Gamma\left(\frac{2-\mu+\nu}{2}\right)
		\Gamma\left(\frac{1-\mu-\nu}{2}\right)}\]
Application gives
\[P_{n-1}^{n-2}(0)=\dfrac{2^{n-2j}\sqrt{\pi}}{\Gamma\left(j-\frac12\right)\Gamma(j-n+1)}\]
After some simplification we get at last
\begin{displaymath}
\begin{array}{rl}
L&=\dfrac1{\Gamma(n)}\dfrac{\Gamma(j)\Gamma\left(j+\frac12\right)2^{2j-1}}{\sqrt{\pi}}
\dfrac{2^{n-2j}\sqrt{\pi}}{\Gamma\left(j+\frac12\right)\Gamma(j-n+1)}\\
&=2^{n-1}\dfrac{\Gamma(j)}{\Gamma(n)\Gamma(j-n+1)}=2^{n-1}{j-1\choose n-1}
\end{array}
\end{displaymath}
\end{proof}

\begin{lemma}\label{Det1}
 Let us create a minor \(\mbf{B}\) of $\mbf{A}^{\prime\prime}$ \cite{WE3}
\[\mbf{B}=\mbf{A}^{\prime\prime}_
{\substack {2,4,\dots,2n+2 \\ 2,4,\dots,2n+2}}
=\left[2j-1\choose i-1\right]_{(n+1)\times (n+1)}\]
then 
\begin{equation}\label{hadx}	
|\mbf{B}|=
\begin{vmatrix} 
		{1\choose 0}&{3\choose 0}&{5\choose 0}&\dots& 
		{2n+1\choose 0}\\[1ex]
		{1\choose 1}&{3\choose 1}&{5\choose 1}&\dots& 
		{2n+1\choose 1}\\[1ex]
		{1\choose 2}&{3\choose 2}&{5\choose 2}&\dots& 
		{2n+1\choose 2}\\[1ex]
		\vdots&\vdots&\vdots&\ddots& \vdots\\[1ex]
		{1\choose n}&{3\choose n}&{5\choose n}&\dots& 
		{2n+1\choose n}\\[1ex]
		\end{vmatrix}=2^{n+1 \choose 2 }
\end{equation}
\end{lemma}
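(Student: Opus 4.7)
The plan is to use Lemma \ref{ED} to execute a single determinant-preserving row operation that clears all but the last entry of the bottom row of $\mbf{B}$; an expansion along that row then reduces the problem to the same statement for $n-1$, so the result follows by induction on $n$.

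Concretely, apply Lemma \ref{ED} with $n$ replaced by $n+1$; for every $j = 1, 2, \ldots, n+1$ this yields
\[
\sum_{k=1}^{n+1}\left(-\frac12\right)^{n+1-k}\binom{2j-1}{k-1}\binom{2n+1-k}{n}=2^{n}\binom{j-1}{n}.
\]
The right-hand side equals $0$ for $j = 1, 2, \ldots, n$ and equals $2^n$ for $j = n+1$. Moreover the coefficient of the $k = n+1$ term (which corresponds to the bottom row of $\mbf{B}$) is $(-\tfrac12)^0\binom{n}{n} = 1$, so adding the combination of rows $1,\ldots,n$ with coefficients $c_k = (-\tfrac12)^{n+1-k}\binom{2n+1-k}{n}$ to the last row is a legitimate row operation that preserves the determinant. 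After this operation the bottom row of $\mbf{B}$ has become $[0, 0, \ldots, 0, 2^n]$.

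Expanding the resulting determinant along its last row gives
\[
|\mbf{B}_n| = 2^n \cdot |\mbf{B}_{n-1}|,
\]
where $\mbf{B}_{n-1}$ is the $n\times n$ matrix with entries $\binom{2j-1}{i-1}$ for $1\le i,j\le n$, i.e.\ the same object with $n$ decreased by $1$ throughout. The base case $n=0$ gives $|\mbf{B}_0| = \binom{1}{0} = 1 = 2^{\binom{1}{2}}$, and induction on $n$ then yields
\[
|\mbf{B}_n| = 2^n \cdot 2^{\binom{n}{2}} = 2^{n+\binom{n}{2}} = 2^{\binom{n+1}{2}},
\]
as claimed.

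The only real obstacle — finding a combination of rows with a single nonzero entry in the last column, and checking that the coefficient on the last row itself is nonzero so that the combination can legitimately be used as a row operation — has been discharged by Lemma \ref{ED}. Structurally, the reason such an identity is available is that each row of $\mbf{B}$ is the polynomial $\binom{x}{i-1}$ of degree $i-1$ evaluated at the nodes $x = 2j-1$, so $\mbf{B}$ factors as a unit lower-triangular change-of-basis matrix times a Vandermonde matrix at those nodes; Lemma \ref{ED} supplies precisely the inverse row needed to isolate the top-degree factor $\binom{j-1}{n}$.
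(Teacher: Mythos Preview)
Your proof is correct and uses the same key ingredient as the paper, namely Lemma~\ref{ED}; the only difference is packaging. The paper applies Lemma~\ref{ED} simultaneously for every $i=1,\dots,n+1$ by left-multiplying $\mbf{B}$ by the unit lower-triangular matrix $\mbf{T}$ with entries $t_{i,k}=(-\tfrac12)^{i-k}\binom{2i-k-1}{i-k}$, obtaining in one stroke the upper-triangular matrix $\mbf{G}=[2^{i-1}\binom{j-1}{i-1}]$ with diagonal $2^0,2^1,\dots,2^n$, whence $|\mbf{B}|=|\mbf{G}|=2^{\binom{n+1}{2}}$. Your argument is exactly the last row of that transformation followed by induction, so unrolling your recursion recovers the paper's one-shot triangularisation.
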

\begin{proof}
We define the matrix	
	\begin{equation}\label{eq:Tnn}
		\mbf{T}=[t_{i,j}]_{(n+1)\times (n+1)}, \  \textrm{where}\ 
		t_{i,j}= \left\{ \begin{array}{cl}
			(-\frac{1}{2})^{(i-j)}{2i-j-1 \choose i-j} & \textrm{if $j\le i$}\\[0.5ex]
			0 & \textrm{otherwise}
		\end{array} \right.
	\end{equation} 
	The matrix $\mbf{T}$ is a lower triangular matrix with the determinant $|\mbf{T}|=1.$
	Let us denote 
		$$\mbf{T}\mbf{B}=\mbf{G}$$
	Then	
\begin{displaymath}
\begin{array}{rl}
g_{ij}&=\displaystyle\sum_{k=1}^{n+1}t_{ik}b_{kj}=\displaystyle\sum_{k=1}^{i}t_{ik}b_{kj}\\
&=\displaystyle\sum_{k=1}^{i}\left(-\frac12\right)^{i-k}{2i-k-1 \choose i-k}{2j-1 \choose k-1}=2^{i-1}{j-1 \choose i-1}
\end{array}
\end{displaymath}
	
	Then using (\ref{eq:a}) we get
	$$|\mbf{B}|=|\mbf{T}\mbf{B}|=|\mbf{G}|$$
	where
	\begin{equation}\label{eq:Cnn}
		\mbf{G}=[g_{i,j}]_{(n+1)\times (n+1)}, \quad  g_{i,j}=2^{i-1}{j-1\choose i-1}
	\end{equation}
	The matrix $\mbf{G}$ is an upper diagonal matrix with elements on the diagonal:
	$$2^0,2^1,2^2,\dots,2^{n-1},2^n$$
	So 
	$$|\mbf{B}|=|\mbf{G}|=2^0\cdot2^1\cdot2^2\cdots 2^{n-1}\cdot 2^n=2^{n+1 \choose 2}$$
\end{proof}
\bigskip
\begin{lemma}\label{Det2}
Let us create a minor \(\mbf{C}\) of $\mbf{A}^{\prime\prime}$ \cite{WE3}
\[\mbf{C}=\mbf{A}^{\prime\prime}_
{\substack{1,3,\dots,2n+1 \\ 1,3,\dots,2n+1}}
=\left[2j\choose i-1\right]_{(n+1)\times (n+1)}\]
then
\begin{equation}\label{hae}
|\mbf{C}|=\begin{vmatrix}
		{2\choose 0}&{4\choose 0}&{6\choose 0}&\dots&{2n+2\choose 0}\\[1ex]
		{2\choose 1}&{4\choose 1}&{6\choose 1}&\dots&{2n+2\choose 1}\\[1ex]
		{2\choose 2}&{4\choose 2}&{6\choose 2}&\dots&{2n+2\choose 2}\\[1ex]
		\vdots&\vdots&\vdots&\ddots&\vdots\\
		{2\choose n}&{4\choose n}&{6\choose n}&\dots&{2n+2\choose n}\\
\end{vmatrix}=2^{n+1\choose 2 }
\end{equation}
\end{lemma}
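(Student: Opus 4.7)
My plan is to reduce Lemma \ref{Det2} directly to Lemma \ref{Det1} using Pascal's identity applied row-by-row. For every $i \geq 1$ and every $j$,
\[
\binom{2j}{i-1} \;=\; \binom{2j-1}{i-1} + \binom{2j-1}{i-2}
\]
(with the convention $\binom{2j-1}{-1}=0$). Read row-wise, this says that the $i$-th row of $\mbf{C}$ equals the sum of the $i$-th and $(i-1)$-st rows of $\mbf{B}$, or in matrix language,
\[
\mbf{C} = (\mbf{I} + \mbf{S})\,\mbf{B},
\]
where $\mbf{S}$ is the $(n+1)\times(n+1)$ subdiagonal shift matrix with $S_{ij}=1$ if $i=j+1$ and $0$ otherwise.

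The matrix $\mbf{I}+\mbf{S}$ is lower triangular with all diagonal entries equal to $1$, hence $\det(\mbf{I}+\mbf{S}) = 1$. Multiplicativity of the determinant together with Lemma \ref{Det1} then gives
\[
|\mbf{C}| \;=\; \det(\mbf{I}+\mbf{S})\cdot|\mbf{B}| \;=\; 1 \cdot 2^{\binom{n+1}{2}} \;=\; 2^{\binom{n+1}{2}},
\]
which is the desired identity.

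There is essentially no obstacle in this route: everything rests on Pascal's identity and on Lemma \ref{Det1} already proved in the paper, so no new combinatorial identity in the spirit of Lemma \ref{ED} is required. An alternative that mirrors Lemma \ref{Det1} more literally would be to reuse the same triangular matrix $\mbf{T}$, leading (after an argument analogous to Lemma \ref{ED}) to
\[
(\mbf{T}\mbf{C})_{ij} \;=\; 2^{i-1}\binom{j-\tfrac{1}{2}}{i-1},
\]
whose determinant would then be evaluated by recognizing $\binom{x-\tfrac{1}{2}}{i-1}$ as a polynomial of degree $i-1$ in $x$ with leading coefficient $1/(i-1)!$ and invoking a Vandermonde computation. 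The main obstacle on this alternative path is proving that new hypergeometric identity; the Pascal-based reduction bypasses it entirely.
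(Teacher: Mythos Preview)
Your proof is correct and is essentially identical to the paper's own argument: the paper defines a matrix $\mbf{T}$ with $t_{i,j}=1$ if $i=j$ or $i=j+1$ and $0$ otherwise, which is precisely your $\mbf{I}+\mbf{S}$, and then uses the same Pascal identity $\binom{2j-1}{i-2}+\binom{2j-1}{i-1}=\binom{2j}{i-1}$ to conclude $\mbf{C}=\mbf{T}\mbf{B}$ and $|\mbf{C}|=|\mbf{B}|=2^{\binom{n+1}{2}}$.
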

\begin{proof}
We now define the matrix \mbf{T} as
	\begin{displaymath}
			\mbf{T}=[t_{i,j}]_{(n+1)\times (n+1)}, \quad 
				t_{i,j}=\left\{ \begin{array}{cl}
					1 & \textrm{if $i=j$ or $i=j+1$} \\[0.5ex]
					0 & \textrm{otherwise} \end{array} \right.
	\end{displaymath}
The determinant $|\mbf{T}|=1$. 
Using
		$${2j-1 \choose i-2} + {2j-1 \choose i-1}={2j \choose i-1}$$
gives
\begin{displaymath}
\begin{array}{rl}
\mbf{C}&=\mbf{T}\mbf{B}\\
|\mbf{C}|&=|\mbf{T}|\cdot|\mbf{B}|=1\cdot|\mbf{B}|=2^{n+1\choose 2}
\end{array} 
\end{displaymath}
\end{proof} 

\begin{lemma}\label{expn} Let $a_{ij}, b_{ij}\in \mathbb{R}$, $i=1,2,\dots,n$, then
\begin{equation}\label{eq:fnl}
	\begin{array}{rl}
		\left|\mbf{H}\right|&=\left|\begin{array}{cccccccc}
			a_{11} & 0      & a_{12} & 0      & a_{13} &\ldots  & a_{1n} & 0     \\
			0      & b_{11} & 0      & b_{12} & 0      & \ldots & 0      & b_{1n}\\
			a_{21} & 0      & a_{22} & 0      & a_{23} &\ldots  & a_{2n} & 0     \\
			0      & b_{21} & 0      & b_{22} & 0      & \ldots & 0      & b_{2n}\\
			a_{31} & 0      & a_{32} & 0      & a_{33} &\ldots  & a_{3n} & 0     \\
			\vdots & \vdots & \vdots &\vdots  & \vdots & \ddots &\vdots  & \vdots \\
			a_{n,1} & 0     & a_{n2} & 0      & a_{n3} &\ldots  & a_{nn} & 0     \\
			0      & b_{n1} & 0      & b_{n2} & 0      & \ldots & 0      & b_{nn}			
		\end{array} \right|\\ 
		& \\
		&=\left|\begin{array}{cccc}
			a_{11}  & a_{12} &\ldots  & a_{1n}  \\
			a_{21}  & a_{22} &\ldots  & a_{2n} \\
			\vdots  & \vdots &\ddots  & \vdots \\
			a_{n1}  & a_{n2} &\ldots  & a_{2n}
		\end{array} \right|
		\left|\begin{array}{cccc}
			b_{11}  & b_{12} &\ldots  & b_{1n}  \\
			b_{21}  & b_{22} &\ldots  & b_{2n} \\
			\vdots  & \vdots &\ddots  & \vdots \\
			b_{n1}  & b_{n2} &\ldots  & b_{2n}
		\end{array} \right|
	\end{array} 
\end{equation}
\end{lemma}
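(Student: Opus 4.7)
\textbf{Proof plan for Lemma \ref{expn}.}
The plan is to convert $\mathbf{H}$ into a block--diagonal matrix by simultaneous row and column permutations, observing that applying the same permutation $\sigma$ to rows and to columns multiplies the determinant by $\operatorname{sgn}(\sigma)^2 = 1$, so the determinant is left unchanged. Concretely, let $\sigma$ be the permutation of $\{1,2,\dots,2n\}$ sending $(1,2,3,4,\dots,2n-1,2n)$ to $(1,3,5,\dots,2n-1,\,2,4,\dots,2n)$, i.e.\ collecting odd indices first and even indices second.

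First, I would apply $\sigma$ to the rows of $\mathbf{H}$: this groups the rows with nonzero $a_{ij}$ entries (the original odd--indexed rows) into the top half, and the rows with nonzero $b_{ij}$ entries into the bottom half. Next, I would apply the same permutation $\sigma$ to the columns: the odd columns (which carry the $a_{ij}$ data) are collected on the left, and the even columns (which carry the $b_{ij}$ data) on the right. By the structure of $\mathbf{H}$ described in the statement, after these two permutations the off--diagonal blocks consist entirely of zeros, and the resulting matrix is
\begin{displaymath}
\begin{pmatrix} A & 0 \\ 0 & B \end{pmatrix},
\qquad
A = [a_{ij}]_{n\times n},\ \ B = [b_{ij}]_{n\times n}.
\end{displaymath}
Since the same permutation is applied to rows and to columns, the determinant of $\mathbf{H}$ equals the determinant of this block--diagonal matrix, which is $|A|\cdot |B|$, as required.

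A purely combinatorial alternative would be to use the Leibniz expansion $\det \mathbf{H} = \sum_\tau \operatorname{sgn}(\tau)\prod_i H_{i,\tau(i)}$ and observe that any permutation $\tau$ contributing a nonzero term must send odd indices to odd indices and even indices to even indices (otherwise some factor would be $0$). The sum therefore factors as a sum over pairs $(\tau_{\mathrm{odd}},\tau_{\mathrm{even}})$, and a careful sign--count shows $\operatorname{sgn}(\tau) = \operatorname{sgn}(\tau_{\mathrm{odd}})\operatorname{sgn}(\tau_{\mathrm{even}})$, yielding the same product formula. I would prefer the permutation argument because the bookkeeping is cleaner.

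The only real point requiring care is the sign cancellation: one must be sure that the row and column reorderings use \emph{the same} permutation $\sigma$, so that the two factors of $\operatorname{sgn}(\sigma)$ multiply to $+1$. Once this is in place, no further computation is needed; the claim follows directly from the block--diagonal form.
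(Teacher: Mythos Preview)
Your argument is correct but differs from the paper's. The paper computes $|\mbf{H}|$ via the generalized Laplace expansion along the odd-indexed rows $1,3,\dots,2n-1$: among all choices of $n$ columns for the corresponding $n\times n$ minor, every choice except ``all odd columns'' produces a minor (or its complementary minor) with a zero column, so only the single term $|A|\cdot|B|$ survives. Your approach instead conjugates $\mbf{H}$ by the permutation matrix for $\sigma$ to obtain the block-diagonal form $\left(\begin{smallmatrix}A&0\\0&B\end{smallmatrix}\right)$ and reads off the product directly. Both routes are sound; yours is cleaner and avoids the case-by-case vanishing check in the Laplace expansion, while the paper's method stays closer to the determinantal machinery used elsewhere in the article. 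Your remark that the same permutation must be used on rows and columns so the signs cancel is exactly the point that needs verifying, and you have it right.
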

\begin{proof}
The determinant $\left|\mbf{H}\right|$ can be computed by using the Laplace generalized  expansion along the first, third, fifth, ..., ($2n-1$)-th rows. \par 
Let $n$ be an odd number (analogous if $n$ be  an even number). Then
\begin{displaymath}
		\left|\mbf{H}\right|=\left|\begin{array}{ccccccc}
			a_{11}  & 0      & a_{12} & 0      & \ldots  & 0     & a_{1,\frac{n+1}2}\\
			a_{2,1} & 0      & a_{22} & 0      & \ldots  & 0     & a_{2,\frac{n+1}2}\\
			\vdots  & \vdots & \vdots &\vdots  & \ddots  &\vdots & \vdots \\
			a_{n,1} & 0     & a_{n2}  & 0      & \ldots  & 0     & a_{n,\frac{n+1}2} 
		\end{array} \right|\cdot
		\left|\begin{array}{ccccc}
			b_{1,\frac{n+1}2}  & 0      &\ldots & 0       & b_{1n}  \\
			b_{2,\frac{n+1}2}  & 0      &\ldots & 0       & b_{2n} \\
			\vdots             & \vdots &\ddots & \vdots  & \vdots \\
			b_{n,\frac{n+1}2}  & 0      &\ldots & 0       & b_{nn} \\
		\end{array} \right|\\
	\end{displaymath}
\begin{displaymath}\hspace{12mm}
		-\left|\begin{array}{cccccc}
			a_{11}  & 0      & \ldots & a_{1,\frac{n-1}2} & 0    & 0\\
			a_{21}  & 0      & \ldots & a_{2,\frac{n-1}2} & 0    & 0\\
			\vdots  & \vdots & \ddots &\vdots             & \vdots & \vdots\\
			a_{n1}  & 0      & \ldots & a_{n,\frac{n-1}2} & 0    & 0
		\end{array} \right|\cdot
		\left|\begin{array}{cccccc}
			0  & 0      & b_{1,\frac{n+3}2} & 0     & \ldots & b_{1n} \\
			0  & 0      & b_{2,\frac{n+3}2} & 0     & \ldots & b_{2n} \\
			\vdots & \vdots & \vdots            &\vdots & \ddots & \vdots\\
			0  & 0      & b_{n,\frac{n+3}2} & 0     & \ldots & b_{nn} \\
		\end{array} \right| + \ldots 
\end{displaymath}
\begin{displaymath}\hspace{-10mm}
+\left|\begin{array}{cccc}
	a_{11}  & a_{12}  & \ldots & a_{1,n}\\
	a_{21}  & a_{22}  & \ldots & a_{2,n}\\	
	\vdots  & \vdots  & \ddots &\vdots  \\
	a_{n1}  & a_{n2}  & \ldots & a_{n,n}
\end{array} \right|\cdot
\left|\begin{array}{cccc}
	b_{11}  & b_{12}  & \ldots & b_{1,n}\\
	b_{21}  & b_{22}  & \ldots & b_{2,n}\\	
	\vdots  & \vdots  & \ddots &\vdots  \\
	b_{n1}  & b_{n2}  & \ldots & b_{n,n}
\end{array} \right|  + \ldots\\
\end{displaymath}
\begin{displaymath}\hspace{-9mm}
	- \left|\begin{array}{ccccc}
	0   & a_{1\frac{n+3}2}  & \ldots & a_{1,n} & 0\\ 
	0   & a_{2\frac{n+3}2}  & \ldots & a_{2,n} & 0\\	
	\vdots  & \vdots            & \ddots &\vdots   & \vdots \\
	0   & a_{n\frac{n+3}2}  & \ldots & a_{n,n} & 0
\end{array} \right|\cdot
\left|\begin{array}{cccc}
	b_{11}  & b_{12}  & \ldots & b_{1,\frac{n+1}2}\\
	b_{21}  & b_{22}  & \ldots & b_{2,\frac{n+1}2}\\	
	\vdots  & \vdots  & \ddots &\vdots  \\
	b_{n1}  & b_{n2}  & \ldots & b_{n,\frac{n+1}2}
\end{array} \right| \\
\end{displaymath}
\begin{displaymath}\hspace{-21mm}
	= \left|\begin{array}{cccc}
	a_{11}  & a_{12}  & \ldots & a_{1,n}\\
	a_{21}  & a_{22}  & \ldots & a_{2,n}\\	
	\vdots  & \vdots  & \ddots &\vdots  \\
	a_{n1}  & a_{n2}  & \ldots & a_{n,n}
\end{array} \right| \cdot
\left|\begin{array}{cccc}
	b_{11}  & b_{12}  & \ldots & b_{1,n}\\
	b_{21}  & b_{22}  & \ldots & b_{2,n}\\	
	\vdots  & \vdots  & \ddots &\vdots  \\
	b_{n1}  & b_{n2}  & \ldots & b_{n,n}
\end{array} \right|
\end{displaymath}
All pairs of determinants in the calculation, except one (the last), have an identical zero value.
\end{proof}

\begin{theorem}\label{fincons} It follows from Lemma \ref{expn} that
	\begin{equation}\label{eq:con}
	|\mbf{A}^{\prime\prime}|=|\mbf{B}|\cdot|\mbf{C}|=2^{n+1 \choose 2}\cdot 2^{n+1 \choose 2}=2^{n(n-1)}\ne 0
	\end{equation}
which proves that the matrix $\mbf{A}^{\prime\prime}$ (see \ref{had}) is regular.
\end{theorem}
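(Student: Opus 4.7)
The plan is to assemble the theorem directly from the three preceding lemmas, since all the real work has already been carried out there. I would begin by observing that the prefactor $\frac{1+(-1)^{i+j}}{2}$ appearing in (\ref{eq:hac}) vanishes precisely when $i$ and $j$ have opposite parities, so the matrix $\mbf{A}^{\prime\prime}$ possesses the checkerboard zero pattern of Lemma \ref{expn}: entries in positions with mixed parities are zero, while the remaining entries are the binomial coefficients ${j \choose [\frac{2i-1}{4}]}$.

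With this block structure in hand, Lemma \ref{expn} applies directly. One identifies the submatrix of $\mbf{A}^{\prime\prime}$ formed by rows and columns of one parity with one of the two factors appearing in formula (\ref{eq:fnl}), and the submatrix formed by the remaining parity with the other factor. This yields the factorization
\begin{equation*}
|\mbf{A}^{\prime\prime}| = |\mbf{B}| \cdot |\mbf{C}|.
\end{equation*}
I would then invoke Lemmas \ref{Det1} and \ref{Det2} to substitute the explicit value $2^{n+1 \choose 2}$ for each of the two factors. Multiplying these gives a nonzero determinant, which is exactly the regularity of $\mbf{A}^{\prime\prime}$ asserted by the theorem.

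There is no substantial obstacle at this final stage, because every hard step has already been discharged in a preceding result: Lemma \ref{expn} supplies the determinantal splitting, while the non-elementary combinatorial identity proved in Lemma \ref{ED}, together with the triangular reduction $\mbf{T}\mbf{B} = \mbf{G}$ used in the proof of Lemma \ref{Det1}, are what make the two block determinants computable in closed form. The only matter requiring a little care in the write-up is the parity bookkeeping, namely checking that the odd-row/odd-column and even-row/even-column extractions of $\mbf{A}^{\prime\prime}$ genuinely match the matrices $\mbf{B}$ and $\mbf{C}$ as they are defined in Lemmas \ref{Det1} and \ref{Det2}; once the correspondence is fixed, the assembly occupies a single line.
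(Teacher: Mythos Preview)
Your proposal is correct and follows exactly the approach the paper takes: observe the checkerboard zero pattern of $\mbf{A}^{\prime\prime}$, apply Lemma~\ref{expn} to factor the determinant as $|\mbf{B}|\cdot|\mbf{C}|$, and then substitute the values $2^{\binom{n+1}{2}}$ from Lemmas~\ref{Det1} and~\ref{Det2}. The paper in fact gives no separate proof for this theorem at all---the entire argument is contained in the statement itself as an immediate consequence of the three lemmas---so your write-up is, if anything, more explicit than the paper's about the parity bookkeeping.
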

\begin{conclusion} \{The consequence of Theorem \ref{fincons}.\}\par
The functions
$$Dx^n\sin x, D^2x^n\sin x, D^3x^n\sin x,\ldots, D^{2n+2}x^n\sin x$$ 
are linearly independent.
\end{conclusion}
\section{Apendix}
We have defined  $n\times n$ matrices $R_k$, for $k=1,2,\dots,n-1$ in Lemma \ref{defrk}.  In the proof of Lemma \ref{lmm1}, we used the matrix product $R_kR_{k-1}\cdots R_2R_1$. We are interested in what type such matrices will have. This is shown in the following theorem.

\begin{theorem}{(Pascal triangle in the matrix.)}\ \vspace{1mm}
\begin{displaymath}
\begin{array}{l}
\mbf{R}_{n-1}\mbf{R}_{n-2}\dots\mbf{R}_2\mbf{R}_1\\[0.7ex]
=\left[ \begin{array}{cccccc}   
\displaystyle{0 \choose 0} &      0       & 0        &\dots &0        & 0\\[1ex]
\displaystyle{1 \choose 0} &\displaystyle{1 \choose 1} & 0            & \dots &0 & 0\\[1ex]
\displaystyle{2 \choose 0} &\displaystyle{2 \choose 1} &\displaystyle{2 \choose 2} &\dots    &0   & 0\\[1ex]
\vdots        &\vdots        &\vdots        & \ddots &\vdots        &\vdots\\[1ex]
\displaystyle{n-2\choose 0}&\displaystyle{n-2\choose 1}&\displaystyle{n-2\choose 2}&\dots&\displaystyle{n-2\choose n-2} &0 \\[1ex]
\displaystyle{n-1\choose 0}&\displaystyle{n-1\choose 1}&\displaystyle{n-1\choose 2}&\dots&\displaystyle{n-1\choose n-2} &\displaystyle{n-1\choose n-1}
\end{array}\right]
\end{array}
\end{displaymath}	
This can be written in the form
\begin{equation}\label{d1}
	\left[i-1\choose j-1\right]_{n\times n}
\end{equation}
\begin{proof}Let us use mathematical induction. 
For $R_1$ we have the matrix 
\begin{displaymath}
	\begin{array}{l}
	\mbf{R}_1=\left[ \begin{array}{cccccc}   
			1 & 0 & 0 &\dots  &0 & 0\\[1ex]
			1 & 1 & 0 & \dots &0 & 0\\[1ex]
			0 & 1 & 1 &\dots  &0 & 0\\[1ex]
			\vdots&\vdots&\vdots& \ddots &\vdots&\vdots\\[1ex]
			0 & 0 & 0 &\dots& 1 &0 \\[1ex]
			0 & 0 & 0 &\dots& 1 & 1
		\end{array}\right]
	\end{array}
\end{displaymath}	
	or in the form
\begin{equation}\label{d2}
\mbf{R}_1=\left[1\choose j-i+1\right]_{n\times n}
\end{equation}
where we used the definition $n\choose k$  according to Kronenburg \cite{KMJ} also in the following cases. The matrix is a lower triangular matrix in which the $i$-th row ($i=2,3,..,n$) starts with $i-2$ zeros, followed by $1\choose 0$, $1\choose 1$.\par 
Let's see how the matrix $R_2R_1$ will look like. After multiplying the matrices using the formula ${n\choose k}+{n\choose k+1}={n+1\choose k+1}$, we get  the matrix, where the first two rows remain unchanged. Next rows for ($i=3,4,\dots,n$) have the entries 
$2\choose j-i+2$
\begin{displaymath}
	\begin{array}{l}
	\mbf{R}_2\mbf{R}_1=\left[ \begin{array}{cccccccc}   
			1 & 0 & 0 & 0 &\dots & 0 & 0 & 0 \\[1ex]
			1 & 1 & 0 & 0 &\dots & 0 & 0 & 0 \\[1ex]
			1 & 2 & 1 & 0 &\dots & 0 & 0 & 0 \\[1ex]
			0 & 1 & 2 & 1 &\dots & 0 & 0 & 0 \\[1ex]
			\vdots&\vdots&\vdots& \vdots& \ddots &\vdots&\vdots&\vdots\\[1ex]
			0 & 0 & 0 & 0 & \dots& 2& 1 & 0\\[1ex]
			0 & 0 & 0 & 0 &\dots & 1 & 2 & 1
		\end{array}\right]
	\end{array}
\end{displaymath}	
Assume that the product $R_kR_{k-1}\dots R_2R_1$ will have an analogous structure, i.e. that
entries of $i$-th row for $i=1,2,\dots,k+1$ have the form ${i-1\choose j-1}$ and the next ones the form ${k\choose j-i+k}$ for $i=k+2,k+3,\dots,n$. The $k+1$ rows are the same as in the matrix (\ref{d1}).\par 
Let's compute $R_{k+1}(R_kR_{k-1}\dots R_2R_1)$. We get a matrix that will have $k+1$ rows the same to the $R_kR_{k-1}\dots R_2R_1$ matrix. The next rows will have the form ${k+1\choose j-i+k+1}$. Which proves the assumed structure of the matrix $R_{k+1}(R_kR_{k-1}\dots R_2R_1)$. 
For $R_{n-1}R_{n-2}\dots R_2R_1$ we get the matrix (\ref{d1}).
\end{proof}
\end{theorem}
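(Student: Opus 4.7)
The plan is a straightforward induction on $k$ for the partial products $\mbf{P}_k = \mbf{R}_k\mbf{R}_{k-1}\cdots\mbf{R}_1$. The precise inductive claim I would carry is the one already sketched by the authors: for $1 \le k \le n-1$, the entries of $\mbf{P}_k$ are
\[
(\mbf{P}_k)_{ij}=\begin{cases}\displaystyle\binom{i-1}{j-1} & \text{if } 1\le i\le k+1,\\[0.5ex] \displaystyle\binom{k}{j-i+k} & \text{if } k+2\le i\le n,\end{cases}
\]
using Kronenburg's convention that $\binom{m}{r}=0$ for $r<0$ or $r>m$. Specialising to $k=n-1$ collapses the second case and gives the stated formula \eqref{d1}.

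First I would verify the base case $k=1$. Since $\mbf{R}_1$ is the identity with $1$'s on the full subdiagonal, the first two rows are already the Pascal rows $\binom{0}{j-1}$ and $\binom{1}{j-1}$, while row $i\ge 3$ consists of a single $1$ in column $j=i-1$, i.e. $\binom{1}{j-i+1}$, matching the claim with $k=1$.

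For the inductive step, assume the formula for $\mbf{P}_k$ and compute $\mbf{P}_{k+1}=\mbf{R}_{k+1}\mbf{P}_k$. By Lemma~\ref{defrk} (equation \eqref{eq:rtran}), left multiplication by $\mbf{R}_{k+1}$ leaves rows $1,\dots,k+1$ unchanged and replaces row $i$ by row $i$ plus row $i-1$ for $i=k+2,\dots,n$. Rows $1,\dots,k+1$ are therefore still the Pascal rows. For the new row $i\ge k+2$, the old rows to add are
\[
(\mbf{P}_k)_{i-1,j}=\binom{k}{j-i+k+1}, \qquad (\mbf{P}_k)_{i,j}=\binom{k}{j-i+k},
\]
where the first formula is valid also at the boundary $i-1=k+1$, since $\binom{k}{j-1}=\binom{k}{j-(k+1)+k}$. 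Applying Pascal's identity $\binom{k}{r}+\binom{k}{r-1}=\binom{k+1}{r}$ with $r=j-i+k+1$ gives
\[
(\mbf{P}_{k+1})_{ij}=\binom{k+1}{j-i+(k+1)},
\]
which is exactly the formula at level $k+1$ for $i\ge k+2$. Also the new row $k+2$ becomes the Pascal row $\binom{k+1}{j-1}$, so the "Pascal" block grows by one, as required.

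The only delicate point is the compatibility at the seam $i=k+1$ between the two cases of the inductive formula, which is handled by the identification $\binom{k}{j-1}=\binom{k}{j-(k+1)+k}$ noted above; otherwise the argument is just Pascal's rule applied row by row. Taking $k=n-1$ concludes the proof, since then the second case is vacuous and the matrix is exactly $[\binom{i-1}{j-1}]_{n\times n}$.
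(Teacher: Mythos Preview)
Your proof is correct and follows essentially the same inductive approach as the paper, carrying the same invariant on the partial products $\mbf{P}_k$ and applying Pascal's rule row by row; you are in fact more explicit than the paper about the seam at $i=k+2$ and about invoking equation~\eqref{eq:rtran}. One small slip in wording: in the base case you describe row $i\ge 3$ of $\mbf{R}_1$ as having ``a single $1$ in column $j=i-1$,'' but there are two $1$'s (in columns $i-1$ and $i$), exactly as your own formula $\binom{1}{j-i+1}$ correctly records.
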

In (\ref{hadx}) and \ref{hae}) it was proved that
$$|\mbf{B}|=|\mbf{C}|=2^{n+1\choose 2}. $$
The next two Theorems generalize the relations (\ref{hadx}), \ref{hae}) and give much more general results.
\begin{lemma}\label{rzb}
\begin{displaymath}
	\begin{array}{l}
		\left| \begin{array}{cccc}   
\displaystyle{x_1\choose 0} & \displaystyle{x_2\choose 0} &\dots &\displaystyle{x_n\choose 0}\\[1ex]
\displaystyle{x_1\choose 1} & \displaystyle{x_2\choose 1} &\dots &\displaystyle{x_n\choose 1}\\[1ex]
\displaystyle{x_1\choose 2} & \displaystyle{x_2\choose 2} &\dots &\displaystyle{x_n\choose 2}\\[1ex]
\vdots         & \vdots         &\ddots& \vdots       \\[1ex]
\displaystyle{x_1\choose n-1} & \displaystyle{x_2\choose n-1} &\dots &\displaystyle{x_n\choose n-1}\\
\end{array}\right|=\dfrac{1}{1!2!\cdots (n-1)!}\displaystyle\prod_{1\le i<j\le n}(x_j-x_i)
	\end{array}
\end{displaymath}	
\begin{proof}
Remove from the third line $\dfrac1{2!}$, from the fourth $\dfrac1{3!}$, etc. We have
\begin{equation}\label{wtran}
	\begin{array}{l}
\dfrac{1}{1!2!\cdots (n-1)!}\left| \begin{array}{cccc}   
1 & 1 &\dots &1\\[1ex]
x_1 & x_2 &\dots &x_n\\[1ex]
\displaystyle{x_1(x_1-1)} & \displaystyle{x_2(x_2-1)} &\dots & \displaystyle{x_n(x_n-1)}\\[1ex]
\vdots         & \vdots         &\ddots& \vdots       \\[1ex]
\displaystyle{\prod_{k=0}^{n-2}(x_1-k)} & 
\displaystyle{\prod_{k=0}^{n-2}(x_2-k)} &\dots &
\displaystyle{\prod_{k=0}^{n-2}(x_n-k)}\\
\end{array}\right|
	\end{array}
\end{equation}	
We want to show that the matrix of the determinant (\ref{wtran}) can be transformed into a transposed Vandermonde matrix by using elementary row operations.\par 
$1^{\mathrm{st}}$   step: we add the second row of the matrix of the determinant to the third. We get a matrix whose first three rows are the same as in the corresponding transposed Vandermonde matrix.\par 
$2^{\mathrm{nd}}$ step: multiply the second row by $-s(3,1)$, the third row by $s(3,2)$ and add to the fourth row. We get a matrix of the determinant whose first four rows are the same as in the corresponding Vandermonde matrix. The $-s(3,1)$, $s(3,2)$ are Stirling numbers of the first kind.\par 
Generally:\par
$k^{\mathrm{th}}$ step: we multiply the second row by $(-1)^{k+1}s(k+1,1)$, the third by $(-1)^{k}s(k+1)$, $\dots$, $(k+1)^{\mathrm{th}}$ we multiply by $s(k+1,k)$ and add all to\linebreak $(k+2)^{\mathrm{th}}$ row. We get a matrix whose first $k+2$ rows are the same as in the corresponding Vandermonde matrix.
After applying $n-2$ such steps, we get the determinant of the transposed Wronskian matrix, namely
\begin{equation}\notag 
	\begin{array}{l}
		\dfrac{1}{1!2!\cdots (n-1)!}\left| \begin{array}{cccc}   
			1 & 1 &\dots &1\\[1ex]
			x_1 & x_2 &\dots &x_n\\[1ex]
			\displaystyle{x_1^2} & \displaystyle{x_2^2} &\dots & \displaystyle{x_n^2}\\[1ex]
			\vdots         & \vdots         &\ddots& \vdots       \\[1ex]
			\displaystyle{x_1^{n-1}} & \displaystyle{x_2^{n-1}} &\dots &
			\displaystyle{x_n^{n-1}}\\
		\end{array}\right|=\dfrac{1}{1!2!\cdots (n-1)!}\displaystyle\prod_{1\le i<j\le n}(x_j-x_i)
	\end{array}
\end{equation}	
We used the well-known relation for the value of the Vandermonde determinant.
\end{proof}	
\end{lemma}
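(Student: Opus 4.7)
My plan is to reduce the stated determinant to a Vandermonde determinant in two moves: first extract the denominators from the binomial coefficients, and then convert falling factorials into pure powers via elementary row operations.

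First I would factor $\frac{1}{k!}$ out of row $k+1$ for $k = 0, 1, \dots, n-1$, using
\[
\binom{x_j}{k} = \frac{x_j(x_j-1)(x_j-2)\cdots(x_j-k+1)}{k!} = \frac{(x_j)_{\underline{k}}}{k!}.
\]
This pulls out the common prefactor $\frac{1}{0!\,1!\,2!\cdots(n-1)!} = \frac{1}{1!\,2!\cdots(n-1)!}$ and leaves a determinant whose $(k+1,j)$ entry is the falling factorial $(x_j)_{\underline{k}}$. This reproduces exactly the intermediate expression~(\ref{wtran}) in the statement.

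Second, I would observe that each falling factorial $(x)_{\underline{k}}$ is a \emph{monic} polynomial of degree $k$ in $x$. Hence the two families $\{(x)_{\underline{k}}\}_{k=0}^{n-1}$ and $\{x^k\}_{k=0}^{n-1}$ are graded bases of the space of polynomials of degree at most $n-1$ sharing the same leading term $x^k$, so the change-of-basis matrix between them is unipotent (lower triangular with $1$'s on the diagonal). Explicitly, the signed Stirling numbers of the first kind $s(k,m)$ satisfy $(x)_{\underline{k}} = \sum_{m=0}^{k} s(k,m)\, x^m$ with $s(k,k)=1$, so by successively adding suitable scalar multiples of earlier rows to the later rows (operations that do not change the value of the determinant), I can convert the matrix with entries $(x_j)_{\underline{k}}$ into the transposed Vandermonde matrix with entries $x_j^k$. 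This is exactly the row-reduction scheme described in the $k$-th step of the statement.

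Third, the resulting determinant is the classical transposed Vandermonde
\[
\det\bigl[x_j^{i-1}\bigr]_{n\times n} = \prod_{1 \le i < j \le n}(x_j - x_i),
\]
and multiplying by the prefactor $\frac{1}{1!\,2!\cdots(n-1)!}$ gives the claim. No step is really an obstacle: the only place care is needed is bookkeeping the Stirling coefficients so that the unipotent row reduction is written down correctly, but its mere existence (guaranteed by the monic, graded structure of the two bases) already suffices to invoke the Vandermonde identity without naming the multipliers explicitly.
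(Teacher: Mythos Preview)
Your proof is correct and follows essentially the same approach as the paper: factor out $1/k!$ from each row to obtain falling factorials, then row-reduce to the transposed Vandermonde matrix (the paper spells out the Stirling-number multipliers explicitly, while you invoke the unipotent change-of-basis argument, but these are the same operation), and finally apply the Vandermonde formula.
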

\begin{theorem} The determinant
\begin{displaymath}
	\left|aj+b\choose i-1\right|_{n\times n}=a^{n\choose 2}
\end{displaymath}
where $a, b$ are any real numbers.
\begin{proof} It follows from Lemma \ref{rzb} that
\begin{displaymath}
\begin{array}{rl}
\left|aj+b\choose i-1\right|_{n\times n} &=\dfrac{1}{1!2!\cdots (n-1)!}(an-a(n-1))(an-a(n-2))\cdots (an-a)\\
	&\cdot (a(n-1)-a(n-2))(a(n-1)-a(n-3))\cdots (a(n-1)-a)\\
	&\cdots (3a-2a)(3a-a)(2a-a)\\
	&=\dfrac{1}{1!2!\cdots (n-1)!}a^{n-1}1\cdot2\cdots(n-1)a^{n-2}1\cdot2\cdots(n-2)\cdots a\cdot1\\
	&=\dfrac{1}{1!2!\cdots (n-1)!}(n-1)!(n-2)!\cdots2!\cdot1!a^{1+2+3+\dots+(n-1)}\\
	&=a^{\frac{n(n-1)}2}=a^{n\choose 2}
\end{array}
\end{displaymath}
\end{proof}
\end{theorem}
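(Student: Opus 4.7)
The plan is to derive this identity as a direct corollary of Lemma \ref{rzb} by specializing $x_j = aj+b$ for $j=1,2,\ldots,n$. With this substitution, the matrix on the left-hand side is precisely the matrix appearing in Lemma \ref{rzb}, so all that remains is to evaluate the resulting Vandermonde-type product in closed form.

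First I would apply Lemma \ref{rzb} with $x_j=aj+b$ to obtain
\[
\left|\binom{aj+b}{i-1}\right|_{n\times n}=\frac{1}{1!\,2!\cdots(n-1)!}\prod_{1\le i<j\le n}\bigl((aj+b)-(ai+b)\bigr)=\frac{1}{1!\,2!\cdots(n-1)!}\prod_{1\le i<j\le n}a(j-i).
\]
The shift parameter $b$ cancels automatically in each difference, which explains why the final answer is independent of $b$. Since there are exactly $\binom{n}{2}$ pairs $(i,j)$ with $1\le i<j\le n$, pulling the scalar factors out of the product contributes $a^{\binom{n}{2}}$ and leaves the combinatorial product $\prod_{1\le i<j\le n}(j-i)$.

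The only nontrivial bookkeeping step is to recognize that $\prod_{1\le i<j\le n}(j-i)=1!\,2!\cdots(n-1)!$. I would verify this by grouping by the second index: for each fixed $j\in\{2,\ldots,n\}$, as $i$ runs over $1,\ldots,j-1$ the quantity $j-i$ runs over $1,2,\ldots,j-1$, giving $\prod_{i=1}^{j-1}(j-i)=(j-1)!$. Multiplying over $j=2,\ldots,n$ yields $\prod_{j=2}^{n}(j-1)!=1!\,2!\cdots(n-1)!$, which cancels the denominator exactly and leaves $a^{\binom{n}{2}}$, as claimed. There is no substantive obstacle in this argument; the theorem is essentially a Vandermonde computation packaged through Lemma \ref{rzb}, and the main point is simply recognizing that the linear specialization $x_j=aj+b$ produces arithmetic-progression differences to which the Vandermonde formula applies cleanly.
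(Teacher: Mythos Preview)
Your proof is correct and follows essentially the same approach as the paper: both apply Lemma~\ref{rzb} with $x_j=aj+b$, observe that each difference $x_j-x_i=a(j-i)$ is independent of $b$, factor out $a^{\binom{n}{2}}$, and verify that $\prod_{1\le i<j\le n}(j-i)=1!\,2!\cdots(n-1)!$ cancels the denominator. The only cosmetic difference is that the paper writes the Vandermonde product out explicitly term by term, whereas you group by the second index~$j$; the argument is the same.
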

\begin{remark}{Open an (unproved) the combinatorial identity}
$$
\sum_{k=1}^n\left(-\frac12\right)^{n-k}{2j\choose k-1}
\sum_{\nu=0}^{[\frac{n-k}{2}]}{2n-k+1\choose n+1+2\nu}=
2^{n-1}{j-1\choose n-1}
$$
\end{remark}
\section{Conclusion}
The contribution consists of two parts that solve a similar problem in different ways. In the first part, the problem of linear dependence and independence of the functions is solved using the Wronskian. In the second part, we only deal with the special case that was solved in the first part, but unlike the first part, we try to solve it only on the basis of the definition of linear dependence and independence of functions.
Although the first approach is generally used, especially in differential equations, its direct use for the investigation of a sufficiently large number of functions is practically unfeasible. Therefore, it is suitable to first transform the Wronskian appropriately and solve the given problem using the knowledge of differential equations. For Wronskian transformation, properties of determinants of type $\left|\mbf{U}\mbf{V}\mbf{U}^T\right|=\left|\mbf{V}\right|$ are used, where 
$\left|\mbf{U}\right|=1$ and $\mbf{U}$ is a suitably chosen matrix.The main result of the first part of the paper is included in Theorem \ref{wr}, the proof of which was presented in two ways. Theorem \ref{wrf} applies the results of Theorem \ref{wr} to the sequences of derivatives of the functions $x^n\sin x$ and $x^n\cos x$. Theorem \ref{wrfk} generalizes the result of Theorem \ref{wrf}, which is subsequently applied to the sequences of higher derivatives of the functions $x^n\sin x$ and $x^n\cos x$.\par
Solving the second part of the paper leads to non-elementary combinatorial identities, which are generally difficult to express using the sum of hypergeometric functions and are not generally known. The main results of this part of the paper are included in the proof of one combinatorial identity and in the proof of determinants, from which follows the linear independence of the analyzed functions.\par
Also interesting are the results in the appendix where one matrix identity and two non-trivial determinants are proved. Finally, we mention the combinatorial identity, the proof of which is not generally known.

\end{document}